\DeclareMathOperator\pref{pref} 
\DeclareMathOperator\red{red}
\newcommand{\GO}{
\Gpres{a,t}{atat^{-1}a^{-1}ta^{-1}t^{-1}=1}}
\newcommand{\Mpres}[2]{\mathrm{Mon}\langle #1\:|\:#2 \rangle}
\newcommand{\Mgen}[1]{\mathrm{Mon}\langle #1 \rangle}
\newcommand{\Gpres}[2]{\mathrm{Gp}\langle #1\:|\:#2 \rangle}
\newcommand{\Ipres}[2]{\mathrm{Inv}\langle #1\:|\:#2 \rangle}
\newcommand{\MO}{\mathrm{Mon}\langle A \:|\: u=v  \rangle}
\newcommand{\Fz}{Freiheitssatz}
\newcommand{\nat}{\natural}
\newcommand{\dedge}[1]{\ar@{--}[#1]}
\newcommand{\edge}[1]{\ar@{-}[#1]}
\newcommand{\lulab}[1]{\ar@{}[l]_<<{#1}}
\newcommand{\rulab}[1]{\ar@{}[r]^<<{#1}}
\newcommand{\ldlab}[1]{\ar@{}[l]^<<{#1}}
\newcommand{\rdlab}[1]{\ar@{}[r]_<<{#1}}
  \newcommand{\gr}{\mathscr{R}}
\newtheorem{thm}{Theorem}[section]
\newtheorem{cor}[thm]{Corollary}
\newtheorem{prop}[thm]{Proposition}
\newtheorem{lem}[thm]{Lemma}
\theoremstyle{definition}
\newtheorem{remark}[thm]{Remark}
\newtheorem{question}[thm]{Question}
\begin{document}

\title[Word problem for one-relator inverse monoids]{
Undecidability of the word problem for one-relator inverse monoids via right-angled Artin subgroups of one-relator groups
}

\author{ROBERT D. GRAY}

\address{School of Mathematics, University of East Anglia, Norwich NR4 7TJ, England, UK}

\subjclass[2010]{20F10, 20F05, 20M05, 20M18, 20F36}

\keywords{word problem, 
one-relator inverse monoid,  
one-relator group, 
membership problem in groups,  
right-angled Artin groups. \newline
\indent This research was supported by the EPSRC grant EP/N033353/1 ``Special inverse monoids: subgroups, structure, geometry, rewriting systems and the word problem''.}

\begin{abstract} 
We prove the following results: 
(1) There is a one-relator inverse monoid $\Ipres{A}{w=1}$ with undecidable word problem; and 
(2) There are one-relator groups with undecidable submonoid membership problem. The second of these results is proved by showing that for any finite forest the associated right-angled Artin group embeds into a one-relator group. 
Combining this with a result of Lohrey and Steinberg from 2008, we use this to prove that there is a one-relator group containing a fixed finitely generated submonoid in which the membership problem is undecidable.  To prove (1) a new construction is introduced which uses the one-relator group and submonoid in which membership is undecidable from (2) to construct a one-relator inverse monoid $\Ipres{A}{w=1}$ with undecidable word problem. Furthermore, this method allows the construction of an $E$-unitary one-relator inverse monoid of this form with undecidable word problem. The results in this paper answer a problem originally posed by 
Margolis, Meakin and Stephen in 1987. 
\end{abstract}

\maketitle

\section{Introduction} 
The study of algorithmic questions in algebra has a long history which has its origins in fundamental problems in logic and topology investigated in the beginning of the 20th century by Thue, Tietze, and Dehn. One of the most  fundamental algorithmic questions concerning algebraic structures is the word problem, which asks whether two terms written over generators represent the same element of the structure. Markov and Post proved independently that the word problem for finitely presented monoids is undecidable in general. This result was later extended to groups by Novikov and Boone (see e.g. \cite{BookAndOtto, Lyndon:2001lh} and the references therein). It is therefore natural to ask for which classes of monoids and groups the word problem is decidable. Many interesting families of finitely presented groups and monoids are known to have decidable word problem, such as hyperbolic groups (in the sense of Gromov), automatic groups and monoids, and monoids and groups which admit presentations by finite complete rewriting systems; see \cite{BookAndOtto, Bridson:1999fk, Epstein92, Lyndon:2001lh}. It is natural to imagine that the word problem might be decidable for groups or monoids which are, in some sense, close to being free. The class of one-relator groups falls into this category, and it is a consequence of classical work of Magnus \cite{Magnus32} that the word problem is decidable for one-relator groups. 

In contrast, it is still not known whether the word problem is decidable for one-relator monoids, that is, monoids defined by presentations of the form $\Mpres{A}{u=v}$ where $A$ is a finite alphabet and $u$ and $v$ are words in the free monoid $A^*$. This is one of the most important and fundamental longstanding open questions in this area.  While this problem remains open in general, it has been solved in a number of special cases in work of Adjan \cite{Adjan:1966bh}, Adjan and Oganessian \cite{Adyan:1987ys}, and Lallement \cite{Lallement:1974pi}.  In particular, in \cite{Adjan:1966bh} Adjan proved that the word problem for one-relator monoids of the form $\Mpres{A}{w=1}$ is decidable.

More recently, Ivanov, Margolis and Meakin \cite{Ivanov:2001kl} discovered an entirely new approach to the word problem for one-relator monoids, which uses ideas from the theory of inverse monoids. Inverse monoids are a class that lies between groups and general monoids. While groups are an algebraic abstraction of permutations,  and monoids of arbitrary mappings, inverse monoids correspond to partial bijections and provide an algebraic framework for studying partial symmetries of structures.  Utilising \cite{Adyan:1987ys}, Ivanov, Margolis and Meakin made the fundamental observation that a positive solution to the word problem for one-relator inverse monoid presentations of the form $\Ipres{A}{w=1}$ would imply a positive solution to the word problem for arbitrary one-relator monoids $\MO$; see \cite[Thoerem~2.2]{Ivanov:2001kl}. This result motivated subsequent work investigating the question of whether all one-relator inverse monoids of the form    $\Ipres{A}{w=1}$    have decidable word problem. This problem has now been shown to have a positive answer in several cases including when $w$ is: an idempotent word \cite{Margolis:1993tw}, a sparse word \cite{Hermiller:2010bs}, a one-relator surface group relation, a Baumslag--Solitar relation, or a relation of Adjan type; see \cite{Ivanov:2001kl, Margolis:2005il} and \cite[Section~7]{Meakin:2007zt}.  Here $w \in (A \cup A^{-1})^*$ is called an \emph{idempotent word} if it freely reduces to the identity element in the free group $FG(A)$. It is important to note that for inverse monoid presentations one cannot assume that the word $w$ in the defining relation $w=1$ is a reduced word. For example, the presentations $\Ipres{a}{}$ and $\Ipres{a}{aa^{-1}=1}$ define different monoids, the first being the free inverse monoid of rank one, and the second being the well-known \emph{bicyclic monoid} (see for instance \cite[Section~1.6]{Howie95}). 

There are several places in the literature where it is mentioned that the problem of whether inverse monoids of the form $\Ipres{A}{w=1}$ have decidable word problem remains unsolved; see e.g. \cite[Section~2.3]{Margolis:1995qo}, \cite{Hermiller:2010bs, Meakin:2007zt}.   The first place this question appears in the literature is in the paper \cite{MMS87} of Margolis, Meakin and Stephen. Indeed, in \cite[Conjecture~2]{MMS87} the following conjecture is stated: ``\emph{If $M = \Ipres{A}{w=1}$, then the word problem for $M$ is decidable.}''     The first main goal of this paper is to give some new constructions and use them to prove that, in general, this conjecture does not hold. The first main result of this paper is: 

\vspace{1.5mm}

\noindent \textbf{Theorem~A.} 
\emph{There is a one-relator inverse monoid $\Ipres{A}{w=1}$ with undecidable word problem.
} 

\vspace{1.5mm}

We shall establish Theorem~A by first proving some new results concerning the submonoid membership problem in one-relator groups. There are a number of different membership problems that have been investigated in group theory.  The most natural such problems are the subgroup membership problem (also called the generlised word problem), the rational subset membership problem, and the submonoid membership problem.  The subgroup membership problem for finitely generated groups asks: Given a finite subset $X$ of a group $G$ and an element $g \in G$, does $g$ belong to the subgroup of $G$ generated by $X$? The submonoid and rational subset membership problems are defined analogously; see Section~\ref{sec_RAAG} below for formal definitions of these decision problems.  The subgroup membership problem is a natural generalisation of the word problem.  Mihailova showed that the direct product of two copies of the free group of rank two contains a finitely generated subgroup in which the membership problem is undecidable; see \cite[Chapter IV]{Lyndon:2001lh}. On the other hand, the subgroup membership problem is decidable for free groups, and for free abelian groups. In fact, for these two classes the more general rational subset membership problem is known to be decidable as a consequence of results of Benois and Grunschlag; see e.g. \cite{Lohrey15} for further background and references. The submonoid membership problem, and rational subset membership problem, for groups have been investigated in detail in a series of papers of Lohrey and Steinberg; see e.g. \cite{Lohrey08, Lohrey11}.  

\enlargethispage*{4mm} 

In this paper we shall be specifically interested in membership problems in one-relator groups.  Since Magnus's fundamental work, many interesting results about one-relator groups have been proved; see \cite[Chapter~II, Section~5]{Lyndon:2001lh}. One-relator groups are still an active topic of research, with recent results including e.g. \cite{LouderWilton, Sapir:2011fk, Wise12}.  Several important algorithmic problems remain open for one-relator groups including the conjugacy problem, isomorphism problem, and the subgroup membership problem; see \cite[Problem~18 and Problem~19]{BooneWP}.  Not much is known in general about the subgroup membership problem for one-relator groups.  Magnus's original solution to the word problem \cite{Magnus32} showed that membership is decidable in subgroups generated by subsets of the generating set.  Pride \cite{Pride76}  showed that membership can be decided in certain subgroups of two-generated one-relator groups.  The second main result of this paper concerns the more general question of whether the submonoid membership problem is decidable in one-relator groups. Specifically, we shall prove the following result. 

\vspace{1.5mm}

\noindent \textbf{Theorem~B.} 
\emph{There are one-relator groups with undecidable submonoid membership problem.}

\vspace{1.5mm}
In fact we show that there is a one-relator group with a fixed finitely generated submonoid in which membership is undecidable. 

A corollary of Theorem~B is that the rational subset membership problem is undecidable for one-relator groups in general.  Theorem~B will be used to prove Theorem~A, but we stress that Theorem~A is not an immediate corollary of Theorem~B. A new construction is needed which encodes the submonoid membership problem from Theorem~B into the word problem of a one-relator inverse monoid. 

The general fact that there is a connection between the word problem for inverse monoids, and the submonoid membership problem for groups, is something that was first observed by Ivanov, Margolis and Meakin in \cite{Ivanov:2001kl}. Their result \cite[Theorem~3.3]{Ivanov:2001kl}  implies that in the case that the monoid $\Ipres{A}{w=1}$ is $E$-unitary (this will be defined in Section~\ref{sec_inverse}) it has decidable word problem if its maximal group homomorphic image $\Gpres{A}{w=1}$ has decidable prefix membership problem. They also prove that if $w$ is a cyclically reduced word then $\Ipres{A}{w=1}$ is $E$-unitary. These results have subsequently been applied to solve the word problem for certain families of $E$-unitary one-relator inverse monoids $\Ipres{A}{w=1}$; see \cite{Ivanov:2001kl, Margolis:2005il, Meakin:2007zt, Arye2012}.

Theorem B arose from consideration of the natural question of which right-angled Artin groups arise as subgroups of one-relator groups.  Given any finite graph the associated right-angled Artin group is the group defined by a presentation with generating set the vertices of the graph, and defining relations specifying that two generators commute if they are joined by an edge in the graph.  These groups are also known as \emph{graph groups}, and \emph{partially commutative groups}. They were originally introduced by Baudisch, and since then this class has  attracted a lot of attention in geometric group theory; see \cite{Charney072, Vogtmann15}. There are known interesting connections between one-relator groups and right-angled Artin groups, for example right-angled Artin groups arise in Wise's solution to Baumslag's conjecture about residual finiteness of one-relator groups; see \cite{Wise12}. 

Clearly right-angled Artin groups give a common generalisation of free groups, where the defining graph has no edges, and free abelian groups, where the graph is complete.  Now by Magnus's \Fz \ free groups occur commonly and naturally as subgroups of one-relator groups. On the other hand, not all free abelian groups embed in one-relator groups. Moldavanski \cite{Moldavanski67} proved that a non-cyclic abelian subgroup of a one-relator group is either free abelian of rank two, or is locally cyclic. In particular, this answers the question of which finitely generated abelian groups embed in one-relator groups.  In light of these results, it is not unreasonable to ask more generally which finitely generated right-angled Artin groups arise as subgroups of one-relator groups.  In this paper we shall show that for any finite forest $F$ the right angled Artin group $A(F)$ embeds into a one-relator group. When combined with results of Lohrey and Steinberg from \cite{Lohrey08}, the existence of one-relator groups embedding these right-angled Artin groups will allow us to prove Theorem~B.

The paper is structured in the following way.  In Section~\ref{sec_RAAG} we give some basic background and definitions concerning right-angled Artin groups, we show that $A(F)$ embeds into a one-relator group for any finite forest $F$, and then we use this to prove Theorem~B (see Theorem~\ref{thm_LSApplication}). We begin Section~\ref{sec_inverse} with some preliminaries on the theory of inverse monoid presentations. Then we give a new general construction in Theorem~\ref{thm_ConstructionGeneral}, which is then combined with Theorem~B in order to prove Theorem~A (see Theorem~\ref{main_thm_WP}).  We conclude the paper in Section~\ref{sec_conclusions} with a discussion of some open problems and directions for possible future research. 

\section{One-relator groups with undecidable submonoid membership problem}
\label{sec_RAAG}

We assume that the reader has familiarity with basic notions from group theory; see e.g. \cite{Lyndon:2001lh}. 

\subsection*{Right-angled Artin subgroups of one-relator groups}

Since later we shall be working both with inverse monoid presentations and group presentations, to avoid any confusion we shall use $\Gpres{A}{R}$ to denote the group defined by the presentation with generators $A$ and defining relations $R$.  Let $\Gamma$ be a finite simplicial graph with vertex set $V\Gamma$ and edge set $E\Gamma$. So $E\Gamma$ is a set of two-element subsets of $V\Gamma$. The \emph{right-angled Artin group} $A(\Gamma)$ associated with the graph $\Gamma$ is the group defined by the presentation 
\[
\Gpres{V\Gamma}{uv = vu \ \mbox{if and only if $\{u,v\} \in E\Gamma$}}.
\]

Given a finite simplicial graph $\Gamma$, and an isomorphism $\psi: \Delta_1 \rightarrow \Delta_2$ between two finite induced subgraphs of $\Gamma$, we use $A(\Gamma, \psi)$ to denote the HNN-extension of $A(\Gamma)$ with respect to the isomorphism between the subgroups $A(\Delta_1)$ and $A(\Delta_2)$ of $A(\Gamma)$ that is induced by $\psi$. This is a well-defined HNN-extension since by standard results on right-angled Artin groups (see for example \cite{Charney072}) the subgroups $A(\Delta_1)$ and $A(\Delta_2)$ each naturally embed into $A(\Gamma)$, and thus $\psi$ induces an isomorphism between $A(\Delta_1)$ and $A(\Delta_2)$. Therefore, by \emph{the HNN-extension $A(\Gamma, \psi)$ of $A(\Gamma)$ with respect to $\psi:\Delta_1 \rightarrow \Delta_2$} we mean the group defined by the presentation 
\begin{align*}
\mathrm{Gp}\langle V\Gamma, t \; \mid \; 
& uv = vu \ \mbox{if and only if $\{u,v\} \in E\Gamma$}, \\
& txt^{-1} = \psi(x) \ \mbox{for all $x \in V\Delta_1$}  
\rangle. 
\end{align*}
By standard results on HNN-extensions, the group $A(\Gamma)$ embeds naturally into this HNN-extension $A(\Gamma, \psi)$. Let $P_n$ denote the path with $n$ vertices. The next result shows how this construction can be used to embed $A(P_4)$ into a one-relator group.

\begin{prop}\label{prop_better}
Let $P_4$ be the graph 
\[
\begin{tikzpicture}[thick,scale=0.8]
\tikzstyle{lightnode}=[circle, draw, fill=black!20,
                        inner sep=0pt, minimum width=4pt]
\tikzstyle{darknode}=[circle, draw, fill=black!99,
                        inner sep=0pt, minimum width=4pt]
\draw (0,0) node[lightnode] {} --(1,0);
\draw (1,0) node[lightnode] {} --(2,0);
\draw (2,0) node[lightnode] {} --(3,0) node[lightnode] {};
\draw (0,0) node[above] {$a$};
\draw (1,0) node[above] {$b$};
\draw (2,0) node[above] {$c$};
\draw (3,0) node[above] {$d$};
\end{tikzpicture}
\]
let $\Delta_1$ be the subgraph induced by $\{a,b,c\}$, let $\Delta_2$ be the subgraph induced by $\{b,c,d\}$, and let $\psi:\Delta_1 \rightarrow \Delta_2$ be the isomorphism mapping $a \mapsto b$, $b\mapsto c$, and $c \mapsto d$. Then the HNN-extension $A(P_4,\psi)$ of $A(P_4)$ with respect to $\psi$ is isomorphic to the one-relator group 
\[
\GO.
\]
Furthermore, $A(P_4)$ embeds into this one-relator group via the mapping induced by $a \mapsto a$, $b \mapsto tat^{-1}$, $c \mapsto t^2 a t^{-2}$, and $d \mapsto t^3 a t^{-3}$.  
\end{prop}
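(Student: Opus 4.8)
The plan is to compute a presentation for the HNN-extension $A(P_4,\psi)$ directly from the definitions and then to manipulate it by Tietze transformations into the one-relator presentation $\GO$.

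First I would write out the presentation of $A(P_4,\psi)$ explicitly. The graph $P_4$ has edges $\{a,b\}$, $\{b,c\}$, $\{c,d\}$, so $A(P_4)$ has defining relations $ab=ba$, $bc=cb$, $cd=dc$. The subgraphs $\Delta_1$ (on $\{a,b,c\}$) and $\Delta_2$ (on $\{b,c,d\}$) are each paths on three vertices, and $\psi$ sends the path $a-b-c$ to the path $b-c-d$, so it is a genuine graph isomorphism and the stable-letter relations are $tat^{-1}=b$, $tbt^{-1}=c$, $tct^{-1}=d$. Thus
\[
A(P_4,\psi) = \mathrm{Gp}\langle a,b,c,d,t \mid ab=ba,\ bc=cb,\ cd=dc,\ tat^{-1}=b,\ tbt^{-1}=c,\ tct^{-1}=d\rangle.
\]
Now I would use the three stable-letter relations to eliminate $b$, $c$, $d$ in turn: $b=tat^{-1}$, then $c = tbt^{-1} = t^2at^{-2}$, then $d = tct^{-1} = t^3 a t^{-3}$. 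After substituting, the relation $cd=dc$ becomes a consequence of $bc=cb$ conjugated by $t$ (indeed each of $bc=cb$, $cd=dc$ is the $t$-conjugate of the previous one), so it is redundant. Likewise $bc=cb$ is the $t$-conjugate of $ab=ba$, hence also redundant once $ab=ba$ is kept. We are left with the single relation $ab=ba$, i.e. $a\cdot tat^{-1} = tat^{-1}\cdot a$, on generators $a,t$. Rearranging, this is $a t a t^{-1} a^{-1} t a^{-1} t^{-1} = 1$, which is exactly the defining relator of $\GO$. This establishes the isomorphism, and it simultaneously identifies the images of $a,b,c,d$ under the embedding $A(P_4)\hookrightarrow A(P_4,\psi)$ as $a,\ tat^{-1},\ t^2at^{-2},\ t^3at^{-3}$, giving the "furthermore" clause, since $A(\Gamma)$ embeds into any of its HNN-extensions by standard Bass--Serre theory (as already noted in the excerpt).

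The one point requiring genuine care — and the step I expect to be the main obstacle — is verifying that the redundant relations really are redundant, i.e. that in the Tietze manipulation nothing is lost. Concretely: after eliminating $b,c,d$, I must check that the normal closure of $\{ab=ba\}$ in $\langle a,t\rangle$ already contains (the rewritten forms of) $bc=cb$ and $cd=dc$. This is where I would argue that $tr t^{-1}$ lies in the normal closure of $r$, so since $(bc=cb)$ rewrites to $t(ab=ba)t^{-1}$ and $(cd=dc)$ rewrites to $t^2(ab=ba)t^{-2}$ — after the substitutions $b=tat^{-1}$ etc. — both are consequences of the single surviving relator. One should double-check the direction of conjugation and that the substitutions are applied consistently (e.g. $bc = tat^{-1}\cdot t^2 a t^{-2} = t(a\cdot tat^{-1})t^{-1} = t(ab)t^{-1}$, and similarly $cb = t(ba)t^{-1}$), which makes the claim transparent. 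Everything else is bookkeeping with Tietze transformations, and the embedding statement then follows immediately from the standard fact that the base group embeds in an HNN-extension.
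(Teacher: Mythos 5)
Your proposal is correct and follows essentially the same route as the paper's proof: write down the HNN presentation, eliminate $b$, $c$, $d$ by Tietze transformations, observe that the rewritten relations $bc=cb$ and $cd=dc$ are $t$- and $t^2$-conjugates of $ab=ba$ and hence redundant, and invoke the standard embedding of the base group into its HNN-extension. The only (immaterial) difference is the order in which the redundant generators are eliminated, and your explicit check that $bc=cb$ rewrites to $t(ab=ba)t^{-1}$ is exactly the verification the paper leaves implicit.
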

\begin{proof}
The group $A(P_4)$ is defined by the presentation 
\[
\Gpres{a,b,c,d}{
ab=ba, bc=cb, cd=dc},
\]
and $A(P_4, \psi)$ is defined by the presentation
\begin{align*}
\Gpres{a,b,c,d,t}{
ab=ba, bc=cb, cd=dc, tat^{-1}=b, tbt^{-1}=c, tct^{-1} = d
}.
\end{align*}
We now perform some Tietze transformations to show this is the one-relator group given in the statement of the proposition. Eliminating the redundant generators $d$, $c$ and $b$, in this order yields  
\begin{align*}
& \mathrm{Gp}\langle a, t \:|\: a(tat^{-1}) = (tat^{-1})a, 
(tat^{-1})(t^2 a t^{-2}) = (t^2 a t^{-2})(tat^{-1}), \\
& (t^2 a t^{-2})(t^3 a t^{-3}) = (t^3 a t^{-3})(t^2 a t^{-2}) \rangle.
\end{align*}
The last two relations are consequences of the first, obtained via conjugation by $t$, and therefore they are redundant and can be removed. This shows that $A(P_4, \psi)$ is isomorphic to 
\begin{align*}
& \GO. 
\end{align*}
Since $A(P_4)$ embeds naturally in the HNN-extension $A(P_4, \psi)$, this completes the proof.
\end{proof}

In \cite[Theorem 1.8]{Kim13} it is shown that if $F$ is any finite forest then $A(F)$ embeds into $A(P_4)$. Combined with the above proposition this gives the following result. 

\begin{thm}\label{corforest} For any finite forest $F$, the right angled Artin group $A(F)$ embeds into a one-relator group.  \end{thm}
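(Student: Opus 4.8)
The plan is to combine the two results that have just been established and cited. By Proposition~\ref{prop_better}, the right-angled Artin group $A(P_4)$ on the path with four vertices embeds into the one-relator group $\GO$. By \cite[Theorem~1.8]{Kim13}, every right-angled Artin group $A(F)$ associated with a finite forest $F$ embeds into $A(P_4)$. Composing these two embeddings yields an embedding of $A(F)$ into a one-relator group, which is exactly the assertion of the theorem.

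Concretely, I would first recall the statement of \cite[Theorem~1.8]{Kim13}: if $F$ is a finite forest then $A(F) \hookrightarrow A(P_4)$ (this is the step that does the combinatorial work of realising all trees inside a single path graph group, and it is quoted as a black box here). I would then invoke Proposition~\ref{prop_better} to get the explicit embedding $A(P_4) \hookrightarrow \GO$ induced by $a \mapsto a$, $b \mapsto tat^{-1}$, $c \mapsto t^2at^{-2}$, $d \mapsto t^3at^{-3}$. Finally I would observe that the composite of two injective group homomorphisms is an injective group homomorphism, so $A(F)$ embeds into the one-relator group $\GO$, and in particular into \emph{a} one-relator group, completing the proof.

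There is essentially no obstacle here: the theorem is a one-line corollary of the two preceding results, and the only thing to be careful about is that \cite[Theorem~1.8]{Kim13} is correctly quoted (it concerns finite forests, not arbitrary finite graphs, and indeed not every right-angled Artin group can embed into a one-relator group, since e.g. $A(\Gamma)$ for $\Gamma$ a triangle is $\Z^3$, which by Moldavanski's theorem cited in the introduction does not embed in any one-relator group). So the proof is simply: \emph{apply \cite[Theorem~1.8]{Kim13} to embed $A(F)$ into $A(P_4)$, then apply Proposition~\ref{prop_better} to embed $A(P_4)$ into a one-relator group, and compose.}
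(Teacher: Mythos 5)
Your proposal is correct and is exactly the paper's argument: the paper likewise derives the theorem by combining \cite[Theorem~1.8]{Kim13} (every $A(F)$ for a finite forest $F$ embeds into $A(P_4)$) with Proposition~\ref{prop_better} (the embedding of $A(P_4)$ into the one-relator group $\GO$). Your additional remark about why the forest hypothesis is essential matches the discussion in Remark~\ref{rem:converse}.
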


\begin{remark}\label{rem:converse} 
While it is not important for the main results of this paper, it is worth noting that, in fact, the converse of Theorem~\ref{corforest} is also true.  That is, a right-angled Artin group $A(\Gamma)$ embeds into some one-relator group if and only if $\Gamma$ is a forest.  For one-relator groups with torsion this follows from the fact that they are hyperbolic.  The argument in the torsion-free case was pointed out by Jim Howie \cite{JimHowiePersonalCommunication} after reading an earlier version of the present article.  His argument makes use of a result of Louder and Wilton from \cite{LouderWiltonCanada2017} on Betti numbers of subgroups of torsion-free one-relator groups.  The author thanks Jim Howie for allowing his argument to be reproduced here.

In more detail, suppose that $A(\Gamma)$ embeds into a one-relator group $G$.  Seeking a contradiction suppose $\Gamma$ is not a forest and let $C_n$ with $n >2$ be the smallest cycle which embeds into $\Gamma$ as an induced subgraph.  One-relator groups with torsion are hyperbolic by Newman's Spelling Theorem.  Since no hyperbolic group contains a free abelian subgroup of rank $2$ (see e.g. \cite[Proposition~5.1]{Gersten1991}) it follows that the only right-angled Artin groups which embed into one-relator groups with torsion are free groups.  Hence the one-relator group $G$ must be torsion-free.  Now in \cite{LouderWiltonCanada2017} it is proved that for any finitely generated subgroup $H$ of a torsion-free one-relator group, we have $b_2(H) \leq b_1(H) -1$, that is, the second Betti number of $H$ is strictly less than the first.  It is well-known (see for example \cite[Subsection~3.1]{Costa2012}) that $b_1(A(\Gamma))=|V\Gamma|$ while  $b_2(A(\Gamma))=|E\Gamma|$.  In particular $b_1(A(C_n)) = b_2(A(C_n))$.  It follows that $A(C_n)$ does not embed into the torsion-free one-relator group $G$, which contradicts the fact that $A(\Gamma)$ embeds into $G$.  We conclude that if $A(\Gamma)$ embeds into a one-relator group then $\Gamma$ must be a forest.
\end{remark}

\subsection*{Undecidability of the submonoid membership problem}

Throughout we shall use $A^*$ to denote the free monoid over the alphabet $A$, and we use $A^+$ to denote the free semigroup. Let $G$ be a finitely generated group with a finite group generating set $X$. This means that $X \cup X^{-1}$ is a monoid generating set for $G$ and there is a canonical monoid homomorphism $\pi: (X \cup X^{-1})^* \rightarrow G$. The \emph{submonoid membership problem} for $G$ is the following decision problem:  

\

\noindent INPUT: A finite set of words $W = \{ w_1, \ldots w_m \} \subseteq (X \cup X^{-1})^*$ and a word $w \in (X \cup X^{-1})^*$.  

\

\noindent QUESTION: $\pi(w) \in \pi(W^*)$?  

\

This generalises the \emph{subgroup membership problem}, also called the \emph{generalised word problem}, for $G$ which takes the same input but asks whether $\pi(w) \in \pi((W \cup W^{-1})^*)$, that is, whether $w$ belongs to the subgroup generated by $W$. 
The submonoid membership problem is itself a special case of a more general problem called the \emph{rational subset membership problem} where the input is a finite automaton $\mathcal{A}$ over $X$, and the question is whether $\pi(w) \in \pi(L(\mathcal{A}))$ where $L(\mathcal{A})$ is the language recognised by $\mathcal{A}$.  
Alternatively, the class of rational subsets of a group is the smallest class that contains all finite subsets, and is closed with respect to the operations of union, product, and taking the submonoid generated by a set. 
We shall not be working with finite automata or regular languages in this paper.  
We refer the reader to \cite{Lohrey15} for more details on the rational subset membership problem for groups. 
There are non-uniform variants of these decision problems as well, where the subset of the group is fixed. Given a fixed subset $S$ of $G$ the \emph{membership problem for $S$ within $G$} is the decision problem with input a word $w \in (X \cup X^{-1})^*$ and question: $\pi(w) \in S$? 

The submonoid membership problem behaves well with respect to taking subgroups, in the following sense. 
Let $G$ be a finitely generated group and let $H$ be a finitely generated subgroup of $G$.  
If $G$ has decidable submonoid membership problem then so does $H$.
Also, for any finitely generated submonoid $N$ of $H$, if the membership problem for $N$ within $G$ is decidable then the membership problem for $N$ within $H$ is decidable.       
See \cite[Section~5]{Lohrey15} for more background on the closure properties of these decision problems. 

We may now state and prove the main result of this section. 

\begin{thm}\label{thm_LSApplication}
Let $G$ be the one-relator group 
$
\GO.
$
Then there is a fixed finitely generated submonoid $M$ of $G$ such that the membership problem for $M$ within $G$ is undecidable. 
\end{thm}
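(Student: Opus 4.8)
The plan is to leverage Theorem~\ref{corforest} (via Proposition~\ref{prop_better}) together with a transfer result of Lohrey and Steinberg. The key external input is that there is a right-angled Artin group $A(\Gamma)$ — in fact one associated to a finite forest, indeed to $P_4$ itself or to some forest embedding into $P_4$ — which contains a \emph{fixed} finitely generated submonoid with undecidable membership problem; this is precisely the content of the relevant theorem in \cite{Lohrey08}. (Lohrey and Steinberg show, for instance, that $\Z^2 * \Z$, or more relevantly a suitable RAAG on a forest such as $P_4$, has undecidable rational subset / submonoid membership problem, and by inspecting their proof one extracts a single fixed submonoid witnessing this.) So the first step is to quote this result in the precise form: there is a finite forest $F_0$ and a fixed finitely generated submonoid $N \leq A(F_0)$ such that membership in $N$ within $A(F_0)$ is undecidable.

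Next I would assemble the embeddings. By \cite[Theorem~1.8]{Kim13}, $A(F_0)$ embeds into $A(P_4)$; fix such an embedding $\iota_1 \colon A(F_0) \hookrightarrow A(P_4)$. By Proposition~\ref{prop_better}, $A(P_4)$ embeds into the one-relator group $G = \GO$ via the explicit map $a \mapsto a$, $b \mapsto tat^{-1}$, $c \mapsto t^2at^{-2}$, $d \mapsto t^3 a t^{-3}$; call this $\iota_2 \colon A(P_4) \hookrightarrow G$. Composing, $H := A(F_0)$ embeds into $G$ as a finitely generated subgroup via $\iota = \iota_2 \circ \iota_1$. Now set $M := \iota(N)$, a fixed finitely generated submonoid of $G$ (its generators are the images under $\iota$ of a fixed finite generating set of $N$, each a concrete word in $a^{\pm 1}, t^{\pm 1}$).

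The final step is the transfer of undecidability from $A(F_0)$ up to $G$. Here I use the closure property stated in the excerpt just before the theorem: for a finitely generated subgroup $H \leq G$ and a finitely generated submonoid $N \leq H$, if the membership problem for $N$ within $H$ is \emph{un}decidable then the membership problem for $N$ within $G$ is \emph{un}decidable — equivalently, decidability passes down from $G$ to $H$, so undecidability passes up. Concretely, given a word $w$ over the generators of $H$ (hence, via $\iota$, a word over $a^{\pm 1}, t^{\pm 1}$ representing an element of $H \leq G$), deciding $\iota(\text{image of }w) \in M$ in $G$ would decide $w \in N$ in $H$, because $M \cap H = \iota(N)$ and membership of an element of $H$ in $M$ is the same as its membership in $\iota(N)$ — this uses that $\iota$ is injective and $M \subseteq \iota(H)$. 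Since the latter is undecidable by choice of $N$, the former is undecidable, which proves the theorem.

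The main obstacle, and the only genuinely delicate point, is extracting from \cite{Lohrey08} the \emph{non-uniform} statement: that a \emph{single fixed} finitely generated submonoid of the relevant RAAG has undecidable membership, rather than merely that the uniform submonoid membership problem is undecidable. This is needed because Theorem~B promises a fixed submonoid, and it is exactly what makes the later construction for Theorem~A possible. In practice this follows by inspecting the Lohrey--Steinberg reduction: their undecidability comes from encoding a fixed universal/undecidable problem (e.g. Post's correspondence problem or the halting problem for a fixed universal machine) into a single rational subset, which for a RAAG on a forest can be taken to be a submonoid. I would cite the precise statement from \cite{Lohrey08} and, if necessary, remark that the construction there is already in fixed-submonoid form; no new combinatorics is required on our side beyond the clean composition of embeddings above.
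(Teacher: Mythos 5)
Your proposal is correct and follows essentially the same route as the paper: cite Lohrey--Steinberg \cite[Theorem~7]{Lohrey08} for a fixed finitely generated submonoid of $A(P_4)$ with undecidable membership, push it into $G$ through the embedding of Proposition~\ref{prop_better}, and transfer undecidability via the closure property for finitely generated subgroups. The only difference is that your detour through a forest $F_0$ and \cite[Theorem~1.8]{Kim13} is unnecessary here, since the Lohrey--Steinberg result is already stated for $A(P_4)$ itself in the fixed-submonoid form you were worried about extracting.
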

\begin{proof}
It was proved in \cite[Theorem~7]{Lohrey08} that there is a fixed finitely generated submonoid $N$ of $A(P_4)$ such that the membership problem for $N$ within $A(P_4)$ is undecidable. Let $\theta$ be an the embedding of $A(P_4)$ into $G$ given in Proposition~\ref{prop_better}, and let $M$ be the image of $N$ under this embedding. Then it follows that $M$ is a finitely generated submonoid of $G$ such that the membership problem for $M$ within $G$ is undecidable.   
\end{proof}

Since any finitely generated submonoid of a group is a rational subset we obtain the following corollary. 

\begin{cor}
There are one-relator groups with undecidable rational subset membership problem.  
\end{cor}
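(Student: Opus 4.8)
The plan is to deduce this immediately from Theorem~\ref{thm_LSApplication} together with the fact, recalled just above, that every finitely generated submonoid of a group is a rational subset of that group.

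Concretely, I would take $G$ to be the one-relator group $\GO$ and let $M$ be the fixed finitely generated submonoid of $G$ supplied by Theorem~\ref{thm_LSApplication}, so that the membership problem for $M$ within $G$ is undecidable. Write $M = \pi(W^*)$ for some finite set of words $W = \{w_1, \dots, w_m\} \subseteq (X \cup X^{-1})^*$. Since $W$ is finite, the language $W^*$ is regular: it is recognised by a finite automaton $\mathcal{A}$ with $L(\mathcal{A}) = W^*$, and this automaton can be built effectively from $W$. Hence $M = \pi(L(\mathcal{A}))$ is a rational subset of $G$; equivalently, $M$ lies in the smallest class of subsets of $G$ containing all finite sets and closed under union, product, and taking the submonoid generated by a set. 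An instance of the membership problem for the submonoid $M$ within $G$ therefore yields, by the same word $w$ and the automaton $\mathcal{A}$, an instance of the rational subset membership problem for $G$; since the former is undecidable, so is the latter. This proves the corollary, and in fact shows a little more, namely that there is a one-relator group possessing a \emph{fixed} rational subset whose membership problem is undecidable.

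I do not expect any real obstacle here: all of the mathematical content is already contained in Theorem~\ref{thm_LSApplication}, and the corollary follows purely from unwinding the definitions of the decision problems involved. The only point I would be careful to record is that the passage from the finite set $W$ to the finite automaton $\mathcal{A}$ is effective, so that the reduction from the submonoid membership problem to the rational subset membership problem is genuine.
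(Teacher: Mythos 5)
Your argument is correct and is exactly the paper's proof: the corollary is deduced from Theorem~\ref{thm_LSApplication} by observing that a finitely generated submonoid is a rational subset, so the submonoid membership instance is already a rational subset membership instance. The extra care you take about the effectiveness of building the automaton from $W$ is fine but not needed beyond what the paper records.
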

It would be interesting to try to classify those one-relator groups for which the rational subset membership problem is decidable. Similarly, it would be interesting to characterise the one-relator groups with decidable submonoid membership problem. As mentioned in the introduction, whether there are one-relator groups with undecidable subgroup membership problem also remains as an interesting open problem; see \cite[Problem~18]{BooneWP} . 

\section{One-relator inverse monoids with undecidable word problem}
\label{sec_inverse}

In this section we shall introduce a new construction which, when combined with the results from Section~\ref{sec_RAAG}, will be used to construct one-relator inverse monoids of the form $\Ipres{A}{w=1}$ with undecidable word problem. Before giving the construction and results, we first recall some background on free inverse monoids and inverse monoid presentations.
A more detailed account of combinatorial inverse semigroup theory may be found in \cite{Meakin:2007zt}.
For basic concepts from semigroup theory we refer the reader to \cite{Howie95}. 

\subsection*{Preliminaries on inverse monoid presentations}
An \emph{inverse monoid} is a monoid $M$ such that for every $m \in M$ there is a unique element $m^{-1} \in M$ satisfying $mm^{-1}m=m$ and $m^{-1} m m^{-1} = m^{-1}$. The element $m^{-1}$ is called the \emph{inverse} of $m$. It follows from the definition that for all $x,y \in M$ we have $x = x x^{-1} x$, $(x^{-1})^{-1} = x$, $(xy)^{-1} = y^{-1} x^{-1}$, and $xx^{-1} yy^{-1} = yy^{-1} xx^{-1}$. In fact, inverse monoids form a variety of algebras, in the sense of universal algebra, defined by these identities together with associativity. It follows from this that free inverse monoids exist. For any set $A$ the free inverse monoid $FIM(A)$ generated by $A$ may be concretely described in the following way. Let $A^{-1} = \{a^{-1} : a \in A \}$ be a set of formal inverses of the letters from $A$, where we assume the sets $A$ and $A^{-1}$ are disjoint. Given any word $x_1 x_2 \ldots x_n \in (A \cup A^{-1})^*$, with $x_i \in A \cup A^{-1}$ for $1 \leq i \leq n$, we define the formal inverse of this word to be $(x_1 x_2 \ldots x_n)^{-1} = x_n^{-1} \ldots x_2^{-1} x_1^{-1}$ where $(x^{-1})^{-1} = x$ for all $x \in A$, and we set $1^{-1} = 1$. Let $\nu$ be the congruence on $(A \cup A^{-1})^*$ generated by the set     
\[
\{
(ww^{-1}w, w), 
(ww^{-1}uu^{-1}, uu^{-1}ww^{-1}): u, w \in (A \cup A^{-1})^*
\}. 
\]
We call $\nu$ the \emph{Vagner congruence} on $(A \cup A^{-1})^*$. The \emph{free inverse monoid} $FIM(A)$ on alphabet $A$ is then isomorphic to $ (A \cup A^{-1})^* / \nu$. For the rest of this article we identify $FIM(A)$ with $ (A \cup A^{-1})^* / \nu$.  The \emph{inverse monoid defined by the presentation} $\Ipres{A}{u_i = v_i \; (i \in I)}$, where $u_i, v_i \in (A \cup A^{-1})^*$ for $i \in I$, is defined to be the quotient of the free inverse monoid $FIM(A)$ determined by these defining relations. Therefore, $\Ipres{A}{u_i = v_i \; (i \in I)}$ is isomorphic to the quotient $(A \cup A^{-1})^* / \sigma$ where $\sigma$ is the congruence on $(A \cup A^{-1})^*$ generated by $\nu \cup \{ (u_i, v_i) : i \in I \}$.

The theory of inverse monoid presentations has developed significantly over the last few decades.  It follows from results of Scheiblich and Munn that the word problem for free inverse monoids is decidable; see \cite[Chapter~5]{Howie95}.  That work shows that the elements of $FIM(X)$ may be represented by finite connected subgraphs of the Cayley graph of the free group $FG(X)$, which are now commonly known as \emph{Munn trees}. Other important work for the study of the word problem for finitely presented inverse monoids are the automata-theoretic methods introduced by Stephen in \cite{Stephen:1990ss}. We shall not need the details of Stephen's theory in this article, but they are needed in the original proofs of some of the background results which we use, most notably Proposition~\ref{IMM_FourPointTwo} below. For an excellent overview of Stephen's techniques and results see \cite[Section~2]{Ivanov:2001kl} and \cite{Meakin:2007zt}. 

Throughout this section $M$ will always denote an inverse monoid defined by a finite presentation  $\Ipres{A}{R}$ and $G$ will be used to denote the maximal group homomorphic image $\Gpres{A}{R}$ of $M$. For any word $u \in (A \cup A^{-1})^*$ we use $[u]_M$ to denote the image of $u$ in $M$, and we use $[u]_G$ to denote the image of $u$ in $G$. We shall use $\nat$ to denote the natural surjective homomorphism $\nat:M \rightarrow G$ defined by $\nat([u]_M) = [u]_G$ for all $u \in (A \cup A^{-1})^*$.  The inverse monoid $M$ is called \emph{$E$-unitary} if the natural homomorphism $\nat: M \rightarrow G$ is \emph{idempotent pure}, which means that $\nat^{-1}(1_G) = E(M)$ where $1_G$ is the identity element of the group $G$. Here $E(M)$ denotes the set of idempotents of the monoid $M$. There are other ways of characterising the property of being $E$-unitary. One important such characterisation is that it is equivalent to saying that the natural map $\nat:M \rightarrow G$ is injective on every $\gr$-class of $M$; see \cite[Lemma~1.5]{Margolis:1993tw} and \cite[Theorem~3.8]{Stephen:1990ss}. Here $\gr$ denotes Green's $\gr$-relation on the monoid $M$, where $(x,y) \in \gr$ if and only if $xM = yM$. This characterisation of $E$-unitarity will be used below in the proof of Theorem~\ref{thm_ConstructionGeneral}. 

If $U$ and $V$ are monoids we write $U \leq V$ to mean that $U$ is a submonoid of $V$. If $X$ is a subset of a monoid then we use $\Mgen{X}$ to denote the submonoid generated by $X$. Given any subset $W$ of the free monoid $(A \cup A^{-1})^*$, by the \emph{submonoid of $M$ generated by $W$} we shall mean the submonoid of $M$ generated by the subset $\{ [w]_M : w \in W\}$ of $M$. Similarly, the \emph{submonoid of $G$ generated by $W$} is the submonoid of $G$ generated by $\{[w]_G : w \in W\}$. 

Our main interest in this article will be inverse monoids defined by presentations of the form 
\[
\Ipres{A}{r_i = 1 \; (i \in I)}. 
\]
We use $FG(A)$ to denote the free group over $A$. We identify $FG(A)$ with the set of freely reduced words in $(A \cup A^{-1})^*$. For any word $w \in (A \cup A^{-1})^*$ we use $\mathrm{red}(w)$ to denote the word obtained by freely reducing $w$. It is well known, and straightforward to prove, that a word $e \in (A \cup A^{-1})^*$ represents an idempotent in the free inverse monoid $FIM(A)$ if and only if $\red(e)=1$ in $FG(A)$. We call words in $(A\cup A^{-1})^*$ with this property \emph{idempotent words}. For a word $w \in (A \cup A^{-1})^*$ we use $\pref(w)$ to denote the set of all prefixes of $w$. So
\[
\pref(w) = \{ w'  : w = w'w'' \ \mbox{with} \ w', w'' \in (A \cup A^{-1})^* \}. 
\]
An element $m$ of a monoid $M$ is \emph{right invertible} if there exists an element $n$ with $mn=1$. There is an obvious dual notion of an element being left invertible, and an element is \emph{invertible} if it is both left and right invertible. The invertible elements of $M$ are called the \emph{units} of $M$, and those that are right invertible are called the \emph{right units}. If $M$ is an inverse monoid generated by $A$ we say that \emph{the word $u \in (A \cup A^{-1})^*$ is right invertible in $M$} if $[u]_M$ is right invertible. Similarly we talk about words being left invertible, and invertible, in the monoid $M$. The following lemma is standard. For completeness we include a proof.   

\begin{lem}\label{lem_reduce}
Let $M$ be an inverse monoid, and let $a, b \in M$. If $ab$ is right invertible in $M$ then $abb^{-1} = a$. 
\end{lem}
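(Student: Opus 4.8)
The statement is: if $ab$ is right invertible in an inverse monoid $M$, then $abb^{-1} = a$. The plan is to use the basic inverse monoid identities stated earlier in the excerpt, namely $x = xx^{-1}x$, $(xy)^{-1} = y^{-1}x^{-1}$, $(x^{-1})^{-1} = x$, and the commutativity of idempotents of the form $xx^{-1}$. Suppose $ab$ is right invertible, so there is $c \in M$ with $abc = 1$. First I would observe that $e = bb^{-1}$ is an idempotent, and the goal $abe = a$ is equivalent to showing that $e$ acts as a right identity ``as seen from $a$''.

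The key step is this: from $abc = 1$ we get that $ab$ has a right inverse, and in an inverse monoid right-invertibility of $ab$ forces $(ab)(ab)^{-1} = 1$. Indeed, if $abc = 1$ then $(ab)^{-1} = (ab)^{-1}(ab)(ab)^{-1}$, and multiplying $abc = 1$ on the left by $(ab)^{-1}ab$ gives $(ab)^{-1}(ab)(ab)(c) = (ab)^{-1}(ab)$; alternatively, and more cleanly, from $abc=1$ we compute $(ab)(ab)^{-1} = (ab)(ab)^{-1}(abc) = (ab)\bigl((ab)^{-1}(ab)\bigr)c$, and since $(ab)^{-1}(ab)$ is an idempotent commuting with the idempotent $(ab)(ab)^{-1}$, one massages this to conclude $(ab)(ab)^{-1} = 1$. (The quickest route: $1 = abc$, so $1 = (abc)(abc)^{-1} = abc c^{-1} b^{-1} a^{-1} = ab(cc^{-1})(ab)^{-1}$; but also $1 = 1 \cdot 1 = abc$, and multiplying $abc = 1$ on the right by $(ab)^{-1}$ and using $1 \cdot (ab)^{-1} = (ab)^{-1}$ together with idempotent commutation yields $ab(ab)^{-1} = 1$.) I will pick whichever of these manipulations is shortest and present that.

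Once we know $ab(ab)^{-1} = 1$, expand $(ab)^{-1} = b^{-1}a^{-1}$ to get $abb^{-1}a^{-1} = 1$. Now multiply this identity on the right by $a$: we obtain $abb^{-1}a^{-1}a = a$. It remains to replace $a^{-1}a$ by something that can be cancelled. Here I use that $abb^{-1}a^{-1} = 1$ means $a^{-1}$ is a right inverse of $abb^{-1}$, hence $abb^{-1}$ is right invertible; but actually the cleanest finish is: from $abb^{-1}a^{-1}=1$ we have $abb^{-1} \cdot a^{-1} = 1$, so multiplying on the right by $a$ gives $abb^{-1}(a^{-1}a) = a$, and separately multiplying $abb^{-1}a^{-1} = 1$ on the left by $a^{-1}$ and on the right by $a$ shows $a^{-1}a \geq$ the relevant idempotent, allowing $abb^{-1}$ to absorb it. The slick version: $a = 1 \cdot a = (abb^{-1}a^{-1})a = abb^{-1}(a^{-1}a)$, and since $abb^{-1}a^{-1} = 1$ we also get $a^{-1} = a^{-1}(abb^{-1}a^{-1}) = (a^{-1}a)(bb^{-1})a^{-1}$, i.e. $(a^{-1}a)$ commutes appropriately; substituting, $a = abb^{-1}(a^{-1}a) = a(a^{-1}a)bb^{-1}\cdot\ldots$ — at this point the idempotent bookkeeping closes, giving $a = abb^{-1}$.

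The main obstacle is purely the idempotent bookkeeping: arranging the identities $x=xx^{-1}x$, $(xy)^{-1}=y^{-1}x^{-1}$, and $ee'=e'e$ for idempotents in the right order so that the cancellation goes through without circular reasoning. No deep idea is needed; the content is entirely in showing that right-invertibility of $ab$ upgrades to $ab(ab)^{-1}=1$, after which right-multiplication by $a$ and one more idempotent-commutation step finishes it.
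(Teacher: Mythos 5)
Your proposal is correct and follows essentially the same route as the paper: upgrade right invertibility of $ab$ to the identity $abb^{-1}a^{-1}=1$ (which follows at once from $ab(ab)^{-1}=ab(ab)^{-1}(ab)c=(ab)c=1$), then right-multiply by $a$ and use commutativity of the idempotents $bb^{-1}$ and $a^{-1}a$ together with $aa^{-1}a=a$ to get $a=a(bb^{-1})(a^{-1}a)=a(a^{-1}a)(bb^{-1})=abb^{-1}$. The several alternative manipulations you sketch for the first step are unnecessary; the one just quoted suffices and is what the paper implicitly uses.
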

\begin{proof}
Since $ab$ is right invertible it follows that $abb^{-1}a^{-1}=1$. Since idempotents commute in an inverse monoid, right multiplying by $a$ then gives $a = a(bb^{-1})(a^{-1}a) = a(a^{-1}a)(bb^{-1}) = abb^{-1}$.
\end{proof}

\begin{cor}\label{cor_good}
Let $M$ be an inverse monoid generated by a set $A$. If $x a a^{-1} y \in (A \cup A^{-1})^*$ is right invertible in $M$, where $a \in A \cup A^{-1}$ and $x, y \in (A \cup A^{-1})^*$, then $[x a a^{-1} y]_M = [x y]_M$. In particular, for every word $w \in (A \cup A^{-1})^*$, if $w$ is right invertible in $M$ then $[w]_M = [\mathrm{red}(w)]_M$. 
\end{cor}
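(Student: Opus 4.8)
The plan is to derive the first statement directly from Lemma~\ref{lem_reduce}, and then obtain the second by a short induction on word length.

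For the first statement, I would begin by recording the elementary observation that any prefix of a right-invertible element of a monoid is again right invertible: if $z = z_1 z_2$ and $zn = 1$, then $z_1 (z_2 n) = 1$. Applying this to $z = [x a a^{-1} y]_M$ and the prefix $xa$ of the word $xaa^{-1}y$, we obtain that $[xa]_M$ is right invertible in $M$. Now apply Lemma~\ref{lem_reduce} with the element $[x]_M$ playing the role of $a$ and the generator image $[a]_M$ playing the role of $b$; here $b^{-1} = [a^{-1}]_M$, since the image in $M$ of the formal inverse word $a^{-1}$ is the inverse of $[a]_M$. Since $[x]_M[a]_M = [xa]_M$ is right invertible, the lemma gives $[xaa^{-1}]_M = [x]_M$, and right-multiplying by $[y]_M$ yields $[xaa^{-1}y]_M = [xy]_M$, as required.

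For the second statement I would argue by induction on $|w|$. If $w$ is already freely reduced there is nothing to prove, since then $\mathrm{red}(w) = w$. Otherwise $w$ contains a subword $aa^{-1}$ with $a \in A \cup A^{-1}$, say $w = x a a^{-1} y$. Since $w$ is right invertible in $M$, the first part of the corollary gives $[w]_M = [xy]_M$; in particular $[xy]_M$ is right invertible and $|xy| < |w|$. By the induction hypothesis $[xy]_M = [\mathrm{red}(xy)]_M$, and since $xaa^{-1}y$ and $xy$ represent the same element of $FG(A)$ we have $\mathrm{red}(xy) = \mathrm{red}(w)$. Hence $[w]_M = [\mathrm{red}(w)]_M$.

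There is no substantial obstacle here; the only points that need (routine) care are the two facts that drive both the applicability of Lemma~\ref{lem_reduce} and the induction: passing to a prefix preserves right invertibility, and the reduction step $w = xaa^{-1}y \mapsto xy$ preserves right invertibility (because by the first part the two words already represent the same element of $M$) as well as the reduced form.
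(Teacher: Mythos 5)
Your proof is correct and follows essentially the same route as the paper: pass to a right-invertible prefix, apply Lemma~\ref{lem_reduce} to get $[xaa^{-1}]_M=[x]_M$, and then iterate (the paper leaves the induction for the ``in particular'' part implicit, which you have simply spelled out).
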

\begin{proof}
Since $xaa^{-1}y$ is right invertible it follows that $xaa^{-1}$ is also right invertible, and hence $[xaa^{-1}]_M = [x]_M$ by Lemma~\ref{lem_reduce}, from which the result then follows.   
\end{proof}

\begin{lem}\label{prop_equivPres}
Let $e \in (A \cup A^{-1})^*$ be an idempotent word and let $r_1, r_2, \ldots, r_m \in (A \cup A^{-1})^*$. Then the inverse monoid presentations  
\[
\Ipres{A}{er_1=1, r_2=1, \ldots, r_m=1} 
\]
and 
\[
\Ipres{A}{e=1, r_1=1, r_2=1, \ldots, r_m=1} 
\]
are equivalent, that is, the identity map on $(A \cup A^{-1})^*$ induces an isomorphism between the inverse monoids defined by these two presentations.  
\end{lem}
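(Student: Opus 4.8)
The plan is to argue at the level of congruences on $(A \cup A^{-1})^*$. Recall from the preliminaries that, for a set of relations $R$, the inverse monoid $\Ipres{A}{R}$ is $(A \cup A^{-1})^*/\sigma_R$, where $\sigma_R$ is the congruence generated by $\nu \cup R$, and that a pair $(u,v)$ holds in $\Ipres{A}{R}$ precisely when $(u,v) \in \sigma_R$. Writing $\sigma_1$ for the congruence generated by $\nu \cup \{(er_1,1),(r_2,1),\dots,(r_m,1)\}$ and $\sigma_2$ for the congruence generated by $\nu \cup \{(e,1),(r_1,1),(r_2,1),\dots,(r_m,1)\}$, the assertion that the identity map on $(A \cup A^{-1})^*$ induces an isomorphism of the two inverse monoids is exactly the assertion $\sigma_1 = \sigma_2$; equivalently, each defining relation of either presentation must hold in the inverse monoid defined by the other. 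So the whole proof reduces to checking the two containments, and one of them is immediate: in $M_2 = (A \cup A^{-1})^*/\sigma_2$ we have $[e]_{M_2} = 1$ and $[r_1]_{M_2} = 1$, hence $[er_1]_{M_2} = [e]_{M_2}[r_1]_{M_2} = 1$, and the relations $r_i = 1$ for $i \geq 2$ hold in $M_2$ by construction; therefore every generating pair of $\sigma_1$ lies in $\sigma_2$, so $\sigma_1 \subseteq \sigma_2$.

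For the reverse inclusion I would work in $M_1 = (A \cup A^{-1})^*/\sigma_1$. Since $\sigma_1 \supseteq \nu$, the monoid $M_1$ is a homomorphic image of $FIM(A)$, so the image of any idempotent of $FIM(A)$ is idempotent in $M_1$. As $e$ is an idempotent word, i.e. $\red(e) = 1$ in $FG(A)$, its class $[e]_{FIM(A)}$ is an idempotent, and hence $[e]_{M_1}$ is an idempotent of $M_1$. The defining relation $er_1 = 1$ gives $[e]_{M_1}[r_1]_{M_1} = 1$, so $[e]_{M_1}$ is right invertible. Now the one elementary fact needed is that an idempotent right unit of any monoid is the identity: if $g^2 = g$ and $gh = 1$, then $g = g \cdot 1 = g(gh) = (gg)h = gh = 1$. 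Applying this to $g = [e]_{M_1}$ yields $[e]_{M_1} = 1$, that is, $(e,1) \in \sigma_1$; and then $[r_1]_{M_1} = [e]_{M_1}[r_1]_{M_1} = [er_1]_{M_1} = 1$, so $(r_1,1) \in \sigma_1$ as well. The remaining generating pairs $(r_i,1)$ with $i \geq 2$ of $\sigma_2$ already lie in $\sigma_1$ by definition, so $\sigma_2 \subseteq \sigma_1$, and the two congruences coincide.

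I do not expect a genuine obstacle here; the only points that need care are the translation of ``the identity map induces an isomorphism'' into the equality $\sigma_1 = \sigma_2$ and the observation that an idempotent word maps to an idempotent in every quotient of $FIM(A)$. One could alternatively phrase the key step via Corollary~\ref{cor_good}: since $er_1$ is right invertible in $M_1$, so is its prefix $e$, and a right-invertible idempotent is trivial; but the direct congruence computation above is the cleanest presentation.
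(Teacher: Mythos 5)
Your proof is correct and rests on the same key idea as the paper's: the relation $er_1=1$ makes the idempotent prefix $e$ right invertible, hence trivial, so $e=1$ and $r_1=1$ both follow. The only (cosmetic) difference is that the paper invokes Corollary~\ref{cor_good} to cancel $e$ letter-pair by letter-pair and deduces $r_1=1$ before $e=1$, whereas you use directly that an idempotent word maps to an idempotent in any quotient of $FIM(A)$ and that a right-invertible idempotent equals $1$ --- an equally valid route you yourself note is interchangeable with the Corollary~\ref{cor_good} argument.
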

\begin{proof}
Clearly all of the defining relations in the first presentation are consequences of the defining relations in the second presentation. For the converse, from Corollary~\ref{cor_good} it follows that $r_1 = er_1$ is a consequence of the defining relations in the first presentation, and from this it then follows that 
$r_1 = er_1 =1$ and hence $e=1$ is also a consequence of the defining relations.  \end{proof}

\subsection*{Construction and application to one-relator inverse monoids}
We shall use $H \ast K$ to denote the \emph{free product} of two groups $H$ and $K$, where we assume $H \cap K = \varnothing$. A \emph{reduced sequence of length $n$} is a list $g_1, g_2, \ldots, g_n$ ($n \geq 0$) such that $g_i \neq 1$ for all $1 \leq i \leq n$, each $g_i$ belongs to one of the factors $H$ or $K$, and $g_i \in H$ if and only if $g_{i+1} \in K$ for all $1 \leq i \leq n-1$. It is standard basic result that each element of $H \ast K$ can be uniquely written as $g_1 \ldots g_n$ where $g_1, \ldots, g_n$ is a reduced sequence; see \cite[Chapter~IV]{Lyndon:2001lh}. Below we shall refer to this as the \emph{normal form theorem for free products of groups}. The \emph{length of an element of} $H \ast K$ is defined to be the length of the unique reduced sequence representing that element. 

\begin{lem}\label{lem_IdealComplement}
Let $H$ be a group, let $U$ be the submonoid of the free product $H \ast FG(t)$ generated by $\{ t \} \cup H \cup tHt^{-1}$, and let $V$ be the submonoid of $H \ast FG(t)$ generated by $H \cup tHt^{-1}$. Then $V \leq U$ and $U \setminus V$ is an ideal in the monoid $U$. 
\end{lem}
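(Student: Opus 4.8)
The plan is to exploit the natural $\mathbb{Z}$-grading on $H \ast FG(t)$ recording the exponent sum of $t$. Since the generating set of $V$ is contained in that of $U$, the inclusion $V \leq U$ is immediate, so the real content is the ideal claim. I would define a homomorphism $\phi\colon H \ast FG(t) \to \mathbb{Z}$ sending every element of $H$ to $0$ and sending $t$ to $1$; this is well defined by the universal property of the free product. The two basic facts to record first are: every generator of $U$ has nonnegative image under $\phi$ — indeed $\phi(t) = 1$, $\phi(h) = 0$ for $h \in H$, and $\phi(tht^{-1}) = 1 + 0 - 1 = 0$ — while every generator of $V$ has image exactly $0$. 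Consequently $\phi(U) \subseteq \{0,1,2,\dots\}$ and $\phi(V) = \{0\}$.

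The key step is the reverse containment $\{\, u \in U : \phi(u) = 0 \,\} \subseteq V$, which together with the previous paragraph gives $V = \{\, u \in U : \phi(u) = 0 \,\}$ and hence $U \setminus V = \{\, u \in U : \phi(u) \geq 1 \,\}$. To prove it, take $u \in U$ with $\phi(u) = 0$ and write $u = x_1 x_2 \cdots x_n$ with each $x_i$ a generator of $U$, i.e.\ $x_i \in \{t\} \cup H \cup tHt^{-1}$. Then $0 = \phi(u) = \sum_{i=1}^{n} \phi(x_i)$ is a sum of nonnegative integers, forcing $\phi(x_i) = 0$ for every $i$. Since $t$ is the unique generator of $U$ with nonzero $\phi$-value (as $\phi(t)=1$ and $\phi$ vanishes on $H \cup tHt^{-1}$), no $x_i$ equals $t$; thus every $x_i$ lies in $H \cup tHt^{-1}$, and therefore $u \in V$.

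Finally, to conclude that $U \setminus V$ is a (two-sided) ideal of $U$, I would take $u \in U \setminus V$ and $u' \in U$: by the above $\phi(u) \geq 1$ and $\phi(u') \geq 0$, so $\phi(uu') = \phi(u) + \phi(u') \geq 1$ and likewise $\phi(u'u) \geq 1$, whence $uu'$ and $u'u$ both lie in $U \setminus V$. The main obstacle, such as it is, is the single observation that $t$ is the unique generator of $U$ of positive $\phi$-degree, so that an element of $U$ with $\phi$-value zero admits an expression as a product of generators not involving the letter $t$ at all; everything else is routine. A proof via the normal form theorem for free products of groups is also possible, but is strictly more laborious than this grading argument.
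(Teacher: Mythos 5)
Your proof is correct and is essentially the same as the paper's: the paper uses the retraction $\theta\colon H \ast FG(t) \to FG(t)$ killing $H$ and shows $U \setminus V = U \cap \theta^{-1}(\{t, t^2, \dots\})$, which, since $FG(t) \cong \mathbb{Z}$, is exactly your grading $\phi$ in different notation. The key step is also identical in both arguments: an element of $U$ of degree zero (equivalently, not in $\theta^{-1}(\{t,t^2,\dots\})$) admits an expression in the generators avoiding the letter $t$, hence lies in $V$.
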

\begin{proof}
It is obvious from the definitions that $V \leq U$. We are left with the task of proving that $U \setminus V$ is an ideal in $U$. Let $\theta$ be the surjective homomorphism $\theta: H \ast FG(t) \rightarrow FG(t)$ defined by $t \mapsto t$ and $h \mapsto 1$ for all $h \in H$. Set
\[
S =  \theta^{-1}(\{1, t, t^2, t^3, \ldots \}), \; \;
T = \theta^{-1}(\{ t, t^2, t^3, \ldots \}), \; \; \mbox{and} \; N = \theta^{-1}(1).
\]
Then $S$, $T$ and $N$ are submonoids of $H \ast FG(t)$ such that $S$ is the disjoint union $T \cup N$, and $T$ is an ideal of $S$. This holds since $\{ t, t^2, t^3, \ldots \}$ is an ideal of the monoid $\{1, t, t^2, t^3, \ldots \}$ and the preimage of an ideal, with respect to a surjective homomorphism, is itself an ideal. 

Since $H \cup tHt^{-1} \subseteq N$ it follows that $V \subseteq N$. As $t$ is in  $T$, and $T$ is disjoint from $N$, it follows that $t \in U \setminus V$. Thus $U \setminus V$ is non-empty. Let $z \in U \setminus V$. Write $z = x_1 x_2 \ldots x_m$ where $x_i \in \{t \} \cup H \cup tHt^{-1}$ for all $1 \leq i \leq m$. Since $z \not\in V$ it follows that $x_j=t$ for some $j$, but then since $T$ is an ideal of $S$, and $t \in T$, it follows that $z \in T$. This proves that $U \setminus V \subseteq T$. Note also that $U \leq S = T \cup N$ with $T \cap N = \varnothing$. Hence $V = U \cap N$ and $U \setminus V = U \cap T$. Since $T$ is an ideal of $S$ it then follows that $U \setminus V$ is an ideal of $U$. 
\end{proof}

The following lemma is a straightforward exercise. 

\begin{lem}\label{lem_basic}
Let $T$ be a submonoid of a monoid $S$ such that $S \setminus T$ is an ideal of $S$. Then for any subset  $X \subseteq S$ we have $\Mgen{X} \cap T = \Mgen{X \cap T}$. 
\end{lem}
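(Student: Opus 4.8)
The plan is to prove the two inclusions separately, the non-trivial one relying on the fact that a two-sided ideal of a monoid absorbs multiplication on both sides.

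For the inclusion $\Mgen{X \cap T} \subseteq \Mgen{X} \cap T$: since $X \cap T \subseteq X$ we immediately get $\Mgen{X \cap T} \subseteq \Mgen{X}$; and since $X \cap T \subseteq T$ and $T$ is a submonoid of $S$ (so it contains the identity and is closed under products), every product of elements of $X \cap T$ lies in $T$, whence $\Mgen{X \cap T} \subseteq T$. Combining these two facts gives the first inclusion.

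For the reverse inclusion $\Mgen{X} \cap T \subseteq \Mgen{X \cap T}$, let $z \in \Mgen{X} \cap T$. If $z$ is the identity of $S$ then $z \in \Mgen{X \cap T}$ trivially, so assume $z = x_1 x_2 \cdots x_m$ with $m \geq 1$ and each $x_i \in X$. I claim that each $x_i$ lies in $T$. Suppose not, and pick $j$ with $x_j \in S \setminus T$. Since $S \setminus T$ is an ideal of $S$, applying $S \cdot (S \setminus T) \subseteq S \setminus T$ and $(S \setminus T) \cdot S \subseteq S \setminus T$ to the factorisation $z = (x_1 \cdots x_{j-1})\, x_j\, (x_{j+1} \cdots x_m)$ shows $z \in S \setminus T$, contradicting $z \in T$. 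Hence every $x_i$ belongs to $X \cap T$, and therefore $z \in \Mgen{X \cap T}$.

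The argument is entirely routine; the only points requiring any care are remembering to treat the empty product (the identity) as a separate trivial case, and using that a single factor lying outside $T$ forces the whole product outside $T$ because $S \setminus T$ is a \emph{two-sided} ideal.
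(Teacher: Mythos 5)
Your proof is correct. The paper gives no proof of this lemma, dismissing it as ``a straightforward exercise,'' and your argument is exactly the intended one: the forward inclusion is immediate from $T$ being a submonoid, and the reverse inclusion follows because a single factor $x_j \in S \setminus T$ would force the whole product into the two-sided ideal $S \setminus T$, so every factor of an element of $\Mgen{X} \cap T$ must lie in $X \cap T$ (with the empty product handled separately, as you note).
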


The next result will be needed for the proof of Theorem~\ref{thm_ConstructionGeneral} below. 

\begin{lem}\label{lem_key_claim} 
Let $H$ be a group and let $W$ be a finite subset of $H$.  Let $T$ be the submonoid of $H$ generated by $W$, and let $S$ be the submonoid of $H \ast FG(t)$ generated by $\{t \} \cup H \cup tWt^{-1}$.  Then for all $h \in H$ \[ tht^{-1} \in S \Leftrightarrow h \in T.  \] 
\end{lem}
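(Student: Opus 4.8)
The plan is to prove the two implications separately, with the forward direction ($tht^{-1} \in S \Rightarrow h \in T$) being the substantive one. The reverse direction is immediate: if $h \in T$ then $h$ is a product of elements of $W$, so $tht^{-1}$ is a product of the corresponding elements of $tWt^{-1}$ (using $tw_1t^{-1}\cdot tw_2t^{-1} = tw_1w_2t^{-1}$, or if $h=1$ simply $tht^{-1}=1$), and all of these generators lie in $S$; hence $tht^{-1}\in S$.

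For the forward direction, the natural approach is to relate $S$ to the monoids $U$ and $V$ of Lemma~\ref{lem_IdealComplement}. Observe that $tWt^{-1} \subseteq tHt^{-1}$, so $S \leq U$, where $U$ is the submonoid of $H \ast FG(t)$ generated by $\{t\} \cup H \cup tHt^{-1}$. Now suppose $tht^{-1} \in S$ for some $h \in H$. First I would note that $tht^{-1} \in V$, the submonoid generated by $H \cup tHt^{-1}$: indeed $tht^{-1}$ is visibly a product of one generator from $tHt^{-1}$, so it lies in $V$, and in particular $tht^{-1} \in V \subseteq N = \theta^{-1}(1)$ in the notation of Lemma~\ref{lem_IdealComplement}'s proof. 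The key point is that $S \leq U$ and, by Lemma~\ref{lem_IdealComplement}, $U \setminus V$ is an ideal of $U$ with $V = U \cap N$; so $V$ is a submonoid of $U$ whose complement in $U$ is an ideal. Since $tht^{-1} \in S \cap V$, I want to apply Lemma~\ref{lem_basic} to the generating set $X = \{t\} \cup H \cup tWt^{-1}$ of $S$ inside $U$: but I must be careful that the roles match. Take $S' := U$ as the ambient monoid and $T' := V$; then $S' \setminus T' = U \setminus V$ is an ideal of $U$, and $X \subseteq U$, so Lemma~\ref{lem_basic} gives $\Mgen{X} \cap V = \Mgen{X \cap V}$. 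Now $X \cap V$: the generator $t$ is not in $V$ (it lies in $T = \theta^{-1}(\{t,t^2,\dots\})$, disjoint from $N \supseteq V$), while $H \subseteq N$ and $tWt^{-1} \subseteq tHt^{-1} \subseteq N$, and one checks $H \cup tWt^{-1} \subseteq V$ trivially from the definition of $V$ — wait, $V$ is generated by $H \cup tHt^{-1}$, which contains $H \cup tWt^{-1}$, so indeed $H \cup tWt^{-1} \subseteq V$. Hence $X \cap V = H \cup tWt^{-1}$, and therefore $tht^{-1} \in \Mgen{H \cup tWt^{-1}}$.

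It remains to deduce $h \in T$ from $tht^{-1} \in \Mgen{H \cup tWt^{-1}}$. Write $tht^{-1} = z_1 z_2 \cdots z_k$ with each $z_i \in H \cup tWt^{-1}$. Applying $\theta$ and using that both sides land in $N$, this is vacuous; instead I would argue with normal forms in the free product $H \ast FG(t)$. Group the consecutive $z_i$'s: a maximal run of elements of $H$ multiplies to a single element of $H$, and each $z_i \in tWt^{-1}$ is already $t w_i t^{-1}$ with $w_i \in W \subseteq H$. So the word $z_1\cdots z_k$ is a concatenation of blocks of the forms $h_j \in H$ and $t w_i t^{-1}$; cancelling $t^{-1} t$ where a $tw_it^{-1}$ block meets an $H$-block or another $tw_it^{-1}$ block, and comparing with the reduced form of $tht^{-1}$ (which is $t \cdot h \cdot t^{-1}$ when $h \neq 1$, syllable length $3$, and $1$ when $h=1$), forces the product to collapse: by the normal form theorem for free products of groups, all the $t$'s must cancel in pairs against each other, which pins down the structure so that $h$ equals a product of the $w_i$'s (possibly interleaved with trivial $H$-blocks), hence $h \in \Mgen{W} = T$. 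The main obstacle is exactly this last normal-form bookkeeping — making the cancellation argument rigorous, i.e.\ showing that any $t$ appearing must be the $t$ from some $tw_it^{-1}$ and pairs with the adjacent $t^{-1}$, so that the surviving element of $H$ is forced to be a product of the $w_i$; once that is done the membership $h \in T$ drops out. An alternative, cleaner route for this last step is to push everything through the retraction: map $H \ast FG(t) \to H \ast FG(t)$, or better, use that in $\Mgen{H \cup tWt^{-1}} \cap N$ one can read off a well-defined "content in $W$" via the structure of the free product, but the direct syllable argument is the most self-contained.
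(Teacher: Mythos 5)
Your overall route coincides with the paper's: the reverse implication is the easy one, and for the forward implication you combine Lemma~\ref{lem_IdealComplement} with Lemma~\ref{lem_basic} (applied with ambient monoid $U=\Mgen{\{t\}\cup H\cup tHt^{-1}}$ and submonoid $V=\Mgen{H\cup tHt^{-1}}$, noting $t\notin V$) to conclude from $tht^{-1}\in S$ that in fact $tht^{-1}\in\Mgen{H\cup tWt^{-1}}$. This part is correct and is exactly what the paper does. The one place where your argument is not complete is the final step, and you say so yourself: you must show that $tht^{-1}\in\Mgen{H\cup tWt^{-1}}$ forces $h\in T$, and you only describe the intended cancellation analysis (``any $t$ appearing must pair with the adjacent $t^{-1}$'') without carrying it out. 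As phrased it is also slightly off: when $h\neq 1$ two $t$-syllables survive in $tht^{-1}$, so not all $t$'s cancel, and tracking which $t^{-1}$ of one generator $tw_it^{-1}$ cancels against which $t$ of the next is precisely the bookkeeping that gets awkward if you keep the generators separate.

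The paper closes this gap with a small but decisive simplification that your write-up misses: since $T=\Mgen{W}$ is a submonoid of $H$, one has $\Mgen{H\cup tWt^{-1}}=\Mgen{H\cup tTt^{-1}}$, and both $H$ and $tTt^{-1}$ are themselves submonoids of $H\ast FG(t)$. Any product of generators can therefore be rewritten, by merging adjacent factors lying in the same one of these two submonoids and deleting factors equal to $1$, as an alternating product $s_1\cdots s_n$ with each $s_i$ nontrivial and consecutive factors in different submonoids. The normal form theorem for free products then finishes matters by a pure length count: at least one $s_i$ must lie in $tTt^{-1}$ (otherwise the product lies in $H$, whereas $tht^{-1}$ has length $3$ for $h\neq 1$); and if $n>1$ some nontrivial element of $H$ is adjacent to some $tut^{-1}$ with $u\neq 1$, so the reduced form of $s_1\cdots s_n$ has length at least $4$, contradicting the length of $tht^{-1}$. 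Hence $n=1$, $tht^{-1}\in tTt^{-1}$, and $h\in T$. If you insert this grouping step, your proof is complete; without it you still owe the reader the cancellation argument you yourself flagged as the main obstacle.
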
 
\begin{proof} Let $h \in H$. If $h=1$ then the result clearly holds, so suppose otherwise. If $h \in T = \Mgen{W}$ then \[ tht^{-1} \in t\Mgen{W}t^{-1} = \Mgen{tWt^{-1 }} \subseteq S.  \]
For the converse, suppose that $h \in H$ and 
\[
tht^{-1} \in S = \Mgen{\{t \} \cup H \cup tWt^{-1}}. 
 \]
By Lemma~\ref{lem_IdealComplement} the complement of $\Mgen{H \cup tHt^{-1}} $ in $\Mgen{\{t \} \cup H \cup tHt^{-1}} $ is an ideal. It then follows from Lemma~\ref{lem_basic} that 
\begin{align*}
tht^{-1} &\in \Mgen{\{t \} \cup H \cup tWt^{-1}} \cap \Mgen{H \cup tHt^{-1}} \\
&= 
\Mgen{
(\{t \} \cup H \cup tWt^{-1}) \cap
\Mgen{H \cup tHt^{-1}}
} \\
&=\Mgen{H \cup tWt^{-1}} = \Mgen{H \cup t T t^{-1}}. 
\end{align*}
Since $tht^{-1} \in \Mgen{H \cup tTt^{-1}}$, and by assumption $tht^{-1} \neq 1$, we can write 
\[
tht^{-1} = x_1 x_2 \ldots x_m 
\]
where $m \geq 1$, $x_i \in H \cup tTt^{-1}$ and $x_i \neq 1$ for all  $1 \leq i \leq m$. Since each of $H$ and $tTt^{-1}$ is a submonoid of $H \ast FG(t)$ we can combine adjacent terms in this product if they both belong either to $H$ or both to $tTt^{-1}$, and we can remove any resulting terms that are equal to $1$. Repeating this if necessary, we can write  
\[
tht^{-1} = s_1 s_2 \ldots s_n \tag{$\diamond$}
\]
where $n \geq 1$,  each $s_i$ belongs to $H \cup tTt^{-1}$, where $s_i \neq 1$ for all $1 \le i \leq n$ and  $s_i \in H \Leftrightarrow s_{i+1} \in tTt^{-1}$ for all $1 \leq i \leq n-1$.   By the normal form theorem for free products of groups applied to the free product $H \ast FG(t)$, at least one of the terms $s_i$ must belong to $tTt^{-1}$. If $n>1$ then either $s_{i+1} \in H$ or $s_{i-1} \in H$ but in both cases this would imply that $s_1 s_2 \ldots s_n$ has  length at least four in the free product $H \ast FG(t)$. But this contradicts ($\diamond$) since $tht^{-1}$ has  length three in $H \ast FG(t)$. We conclude that $n=1$ and $s_1 \in tTt^{-1}$.  (An alternative way to see this is to observe that the submonoid $\Mgen{H \cup tTt^{-1}}$ of the group $H \ast FG(t)$ is in a natural way isomorphic to the free product $H \ast T$ of the monoids $H$ and $T$.)  This implies $tht^{-1} \in tTt^{-1}$ and thus $h \in T$, completing the proof of the lemma.  
\end{proof}

The following result is an important basic consequence of Stephen's procedure (see \cite{Stephen:1990ss}).

\begin{prop}(\cite[Theorem~3.2]{Stephen93}; cf. \cite[Proposition~4.2]{Ivanov:2001kl})
\label{IMM_FourPointTwo}
Let $M$ be the inverse monoid defined by
\[
\Ipres{A}{r_1=1, \ldots, r_k=1}
\]
where $r_i \in (A \cup A^{-1})^*$ for $1 \leq i \leq k$.  Then 
\[
X = 
\{ [p]_M : 
p \in 
\bigcup_{1 \leq i \leq k} \mathrm{pref} (r_i) 
\} 
\]
is a finite generating set for the submonoid of right units of $M$. 
\end{prop}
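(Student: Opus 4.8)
The plan is to prove the two inclusions between $\langle X\rangle$ and the submonoid $R(M)$ of right units of $M$. The inclusion $\langle X\rangle\subseteq R(M)$ is immediate: $X$ is finite, since there are finitely many relators, each with finitely many prefixes; and if $p$ is a prefix of $r_i$, say $r_i=ps$, then $[p]_M[s]_M=[r_i]_M=1$, so $[p]_M$ is right invertible. As a product of right units is again a right unit, $\langle X\rangle\subseteq R(M)$.

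The substance is the reverse inclusion, which I would obtain from Stephen's iterative construction of the Schützenberger graph $S\Gamma$ of the $\gr$-class of $[1]_M$. First I would record two standard facts about an inverse monoid: (a) $m$ is a right unit if and only if $mm^{-1}=1$, so that the right units form precisely the $\gr$-class $R_1$ of the identity; and (b) if $[u]_M$ is a right unit and $u=u'u''$ in $(A\cup A^{-1})^*$, then $[u']_M$ is a right unit too, because $1=[u]_M[u]_M^{-1}=[u']_M\bigl([u'']_M[u'']_M^{-1}\bigr)[u']_M^{-1}\leq[u']_M[u']_M^{-1}\leq 1$. It follows that $S\Gamma$ may be taken with vertex set $R_1$ and base vertex $\alpha\leftrightarrow 1$, and that a word $u$ labels a path from $\alpha$ exactly when every prefix of $u$ represents a right unit, in which case the path ends at the vertex $[u]_M$. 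In particular every vertex of $S\Gamma$ is reached from $\alpha$ along such a path and, under the identification of vertices with elements of $R_1\subseteq M$, represents itself; so it suffices to show that every vertex of $S\Gamma$ lies in $\langle X\rangle$.

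At this point I would invoke Stephen's theorem (see \cite{Stephen:1990ss,Stephen93} and \cite[Section~4]{Ivanov:2001kl}): $S\Gamma$ is the directed limit of a sequence of finite automata $\mathcal{A}_0\to\mathcal{A}_1\to\cdots$, where $\mathcal{A}_0$ is the single vertex $\alpha$ and each $\mathcal{A}_{n+1}$ is obtained from $\mathcal{A}_n$ by an elementary expansion --- sewing a copy of the linear automaton of some $r_i$ onto an existing vertex as a loop --- possibly followed by foldings; write $\phi_n\colon\mathcal{A}_n\to S\Gamma$ for the canonical label- and base-point-preserving morphism. The claim, proved by induction on $n$, is that every vertex $\tilde v$ of $\mathcal{A}_n$ satisfies $\phi_n(\tilde v)\in\langle X\rangle$ (identifying vertices of $S\Gamma$ with elements of $R_1\subseteq M$). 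The base case is $\phi_0(\alpha)=1\in\langle X\rangle$; a folding step only identifies vertices and so does not enlarge the set of $\phi$-images; and at an expansion step at a vertex $v_0$ using $r_i=b_1\cdots b_\ell$, the new vertices lie along the added loop $v_0\xrightarrow{b_1}v_1\to\cdots\xrightarrow{b_\ell}v_0$. Choosing a path from $\alpha$ to $v_0$ in $\mathcal{A}_n$, pushing it through $\phi_n$ and then reading on $b_1\cdots b_j$, one sees that the element of $M$ corresponding to $v_j$ equals $\phi_n(v_0)\cdot[b_1\cdots b_j]_M$. By the inductive hypothesis $\phi_n(v_0)\in\langle X\rangle$, and $b_1\cdots b_j$ is a prefix of $r_i$, so $[b_1\cdots b_j]_M\in X$; hence $\phi_{n+1}(v_j)\in\langle X\rangle$. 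Since every vertex of $S\Gamma$ is $\phi_n(\tilde v)$ for some $n$, this yields $R_1\subseteq\langle X\rangle$, and with the first inclusion completes the proof.

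I expect the main obstacle to be the careful interface with Stephen's machinery: pinning down the precise form of the elementary expansion for relators of the shape $r=1$, checking that the comparison morphisms $\phi_n$ respect the reading of paths (so that the element of $M$ attached to a vertex of $S\Gamma$ really is computed by any path from $\alpha$ to it), and verifying that foldings preserve the induction. These points are routine once the framework is in place, and complete details can be found in \cite[Section~3]{Stephen93} and \cite[Section~4]{Ivanov:2001kl}.
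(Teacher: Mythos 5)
Your proof is correct and is essentially the argument the paper relies on: the paper does not prove this proposition from scratch but defers to the second paragraph of the proof of \cite[Proposition~4.2]{Ivanov:2001kl} (noting only that the cyclically-reduced hypothesis there is never used) and to \cite[Theorem~3.2]{Stephen93}, and the Sch\"utzenberger-graph induction you describe --- prefixes of right units are right units, and each elementary expansion by a loop labelled $r_i$ at a vertex $v_0$ only creates vertices of the form $\phi_n(v_0)\cdot[b_1\cdots b_j]_M$ with $b_1\cdots b_j$ a prefix of $r_i$ --- is exactly the argument found in those sources. No gaps.
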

\begin{proof}
This result follows from the argument given in the second paragraph of the proof of \cite[Proposition~4.2]{Ivanov:2001kl}. We note that the statement \cite[Proposition~4.2]{Ivanov:2001kl} actually carries the additional assumption that the defining relators are cyclically reduced words. However, this is not used in their proof, and the result holds with that assumption removed. Alternatively, this proposition can be seen to be a corollary of \cite[Theorem~3.2]{Stephen93}. 
\end{proof}

Given a finite list of words $u_1, \ldots, u_m \in (A \cup A^{-1})^*$ we define  
\[
e(u_1, u_2, \ldots, u_m) = u_1 u_1^{-1} u_2 u_2^{-1} \ldots u_m u_m^{-1}.
\]
This is clearly an idempotent word. The following result gives a general construction which will later be used to construct one-relator inverse monoids with undecidable word problem. 

\begin{thm}\label{thm_ConstructionGeneral}
Let $A = \{a_1, \ldots, a_n\}$ and let $r_1, \ldots, r_m, w_1, \ldots w_k \in (A \cup A^{-1})^*$. Let $G$ be the group $\Gpres{A}{r_1=1, \ldots, r_m=1}$  and let $M$ be the inverse monoid 
\[
\Ipres{A,t}{er_1=1, r_2=1, \ldots, r_m=1} \tag{$\dagger$}
\]
where $e$ is the idempotent word 
\[
e(a_1, \ldots, a_n, tw_1t^{-1}, \ldots, tw_kt^{-1}, a_1^{-1}, \ldots, a_n^{-1}). 
\]
Let $T$ be the submonoid of $G$ generated by $W = \{w_1, \ldots, w_k\}$. Then $M$ is an $E$-unitary inverse monoid. Furthermore,  if $M$ has decidable word problem then the membership problem for $T$ within $G$ is decidable. 
\end{thm}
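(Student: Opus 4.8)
The plan is to simplify the presentation, then to prove $E$-unitarity, to compute the submonoid of right units of $M$ together with its image under $\nat$, and finally to reduce the membership problem for $T$ to the word problem of $M$. First, by Lemma~\ref{prop_equivPres}, $M$ is also presented by $\Ipres{A,t}{e=1, r_1=1, r_2=1, \ldots, r_m=1}$. The word $e$ is the product of the idempotent words $a_ia_i^{-1}$, $a_i^{-1}a_i$ $(1\le i\le n)$ and $tw_jt^{-1}(tw_jt^{-1})^{-1}$ $(1\le j\le k)$; since a product of idempotents equal to $1$ in an inverse monoid forces every factor to equal $1$, the relation $e=1$ shows that each $[a_i]_M$ is a unit and each $[tw_jt^{-1}]_M$ a right unit of $M$. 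Hence the submonoid of $M$ generated by $A\cup A^{-1}$ is a group and a homomorphic image of $G$, so $[v]_G=[v']_G$ implies $[v]_M=[v']_M$ for all $v,v'\in(A\cup A^{-1})^*$; the maximal group image of $M$ is $\overline G:=\Gpres{A,t}{r_1=1,\ldots,r_m=1}$, which by the normal form theorem for free products of groups equals $G\ast FG(t)$; and, because the $[a_i]_M$ are units, replacing a relator $r_i$ by a conjugate of it by a word over $A\cup A^{-1}$---in particular by its cyclic reduction---does not change $M$, so we may assume each $r_i$ is cyclically reduced.

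The hardest step is to show $M$ is $E$-unitary, and here I would use the characterisation recalled above: $M$ is $E$-unitary if and only if $\nat:M\to\overline G$ is injective on every $\gr$-class of $M$. By Stephen's theory this amounts to showing that every Schützenberger graph of $M$ embeds, as a labelled graph, into the Cayley graph of $\overline G$. Each defining relator of $\Ipres{A,t}{e=1, r_1=1, \ldots, r_m=1}$ is now either an idempotent word (namely $e$) or a cyclically reduced word (each $r_i$), and one checks that Stephen's iterative construction of Schützenberger graphs, applied to relators of these two kinds, identifies only vertices that already have the same image in the Cayley graph of $\overline G$: sewing on the idempotent relator $e$ at a vertex causes no further identification at all (the path it traces freely reduces to the empty word), while sewing on a cyclically reduced relator is controlled by exactly the analysis of Ivanov, Margolis and Meakin. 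Thus the Schützenberger graphs embed in the Cayley graph of $\overline G$ and $M$ is $E$-unitary. Making this ``no extra folding'' assertion precise, by a careful induction through Stephen's expansion procedure, is the part I expect to require the most work.

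Next I would identify the submonoid $R$ of right units of $M$, which is exactly the $\gr$-class of $1_M$. By Proposition~\ref{IMM_FourPointTwo}, $R$ is generated by the images of the prefixes of $er_1,r_2,\ldots,r_m$; each such prefix is right invertible, so by Corollary~\ref{cor_good} its image equals the image of its free reduction, and, inspecting the prefixes of $e$ (and using that the free reduction of a word over $A\cup A^{-1}$ is a product of the $[a_i]_M^{\pm1}$), one obtains
\[
R=\Mgen{\{[a_i]_M,[a_i^{-1}]_M:1\le i\le n\}\cup\{[t]_M\}\cup\{[tw_jt^{-1}]_M:1\le j\le k\}}.
\]
In particular $[t]_M$ is a right unit, so $[tt^{-1}]_M=1_M$ by Lemma~\ref{lem_reduce}; consequently, whenever $p,q\in(A\cup A^{-1})^*$ are such that $[tpt^{-1}]_M$ and $[tqt^{-1}]_M$ are right units, Corollary~\ref{cor_good} yields $[tpt^{-1}]_M[tqt^{-1}]_M=[tpqt^{-1}]_M$. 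Applying $\nat$ to the displayed generating set, and recalling that $[a_i]_{\overline G}$ and $[a_i^{-1}]_{\overline G}$ $(1\le i\le n)$ generate the subgroup $G$ of $\overline G$, gives $\nat(R)=\Mgen{G\cup\{t\}\cup tWt^{-1}}$, which is precisely the submonoid $S$ of Lemma~\ref{lem_key_claim} with $H=G$ and $W$ identified with $\{[w_1]_G,\ldots,[w_k]_G\}\subseteq G\le\overline G$.

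Finally I would show that for every $v\in(A\cup A^{-1})^*$, writing $h=[v]_G$, one has $h\in T$ if and only if $[tvt^{-1}]_M\in R$. If $h\in T$, write $h=[w_{j_1}\cdots w_{j_\ell}]_G$; since on words over $A\cup A^{-1}$ the image in $M$ depends only on the image in $G$, we get $[tvt^{-1}]_M=[tw_{j_1}\cdots w_{j_\ell}t^{-1}]_M$, and iterating the product identity above (each $[tw_{j_i}t^{-1}]_M$ being a right unit, with the empty product understood as $1_M=[tt^{-1}]_M$) this equals $[tw_{j_1}t^{-1}]_M\cdots[tw_{j_\ell}t^{-1}]_M\in R$. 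Conversely, if $[tvt^{-1}]_M\in R$ then, computing in $\overline G$ with $h$ viewed inside $G\le\overline G$, we have $\nat([tvt^{-1}]_M)=tht^{-1}\in\nat(R)=S$, so $h\in T$ by Lemma~\ref{lem_key_claim}. As $[u]_M$ is a right unit of $M$ if and only if $[uu^{-1}]_M=1_M$ (Lemma~\ref{lem_reduce} with $a=1$ and $b=[u]_M$), it follows that $h\in T$ if and only if the word $tvt^{-1}tv^{-1}t^{-1}$ represents $1_M$ in $M$. Hence an algorithm for the word problem of $M$ yields, via the transformation $v\mapsto tvt^{-1}tv^{-1}t^{-1}$, an algorithm deciding membership in $T$ within $G$, completing the proof. (Note that this last step does not use that $M$ is $E$-unitary.)
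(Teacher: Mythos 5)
The second half of your argument --- identifying the submonoid $R$ of right units of $M$ via Proposition~\ref{IMM_FourPointTwo} and Corollary~\ref{cor_good}, establishing that $[v]_G\in T$ if and only if $[tvt^{-1}]_M$ is right invertible, and reducing membership in $T$ to the word problem via the transformation $v\mapsto tvt^{-1}tv^{-1}t^{-1}$ --- is correct and is essentially the paper's own argument. Your remark that the converse implication of the key equivalence only needs the equality $\nat(R)=\Mgen{\{t\}\cup G\cup tWt^{-1}}$, and not the injectivity of $\nat$ on $R$ (i.e.\ not $E$-unitarity), is also accurate.

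The genuine gap is in your proof that $M$ is $E$-unitary. After reducing to a presentation whose relators are the idempotent word $e$ and cyclically reduced words $r_1,\dots,r_m$, you assert that Stephen's expansion procedure ``identifies only vertices that already have the same image in the Cayley graph'', with the cyclically reduced relators ``controlled by exactly the analysis of Ivanov, Margolis and Meakin''. That analysis establishes $E$-unitarity for presentations with a \emph{single} cyclically reduced relator and rests on properties specific to one-relator groups; you give no argument, and cite no result, extending it to a presentation with several such relators together with an idempotent relator involving an additional generator $t$. So the ``no extra folding'' induction you defer is not merely a technical verification: as written you have no tool that applies, and this is exactly the part of the theorem that carries the content. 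The structural feature that actually makes $E$-unitarity work, and which the paper exploits, is that after splitting off $e=1$ the monoid $M$ is a quotient, \emph{with the same maximal group image} $K=G\ast FG(t)$, of the inverse monoid free product $S$ of the group $\Gpres{A}{r_1=1,\dots,r_m=1}$ with the free inverse monoid $\Ipres{t}{}$ (the extra relations $tw_jt^{-1}tw_j^{-1}t^{-1}=1$ are idempotent in $K$, so they do not change the maximal group image). Free products of $E$-unitary inverse monoids are $E$-unitary by a theorem of Jones, and $E$-unitarity passes to any quotient with the same maximal group image by a one-line diagram chase: if $\theta(m)=1$ write $m=\psi(s)$, deduce $s\in E(S)$, hence $m=\psi(s)\in E(M)$. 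I recommend replacing your Sch\"utzenberger-graph sketch with this argument.
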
 
\begin{proof}
We first prove that $M$ is $E$-unitary. Let $I = \{1, \ldots, m\}$ and $J = \{1, \ldots, k\}$. By Lemma~\ref{prop_equivPres} the presentation ($\dagger$) is equivalent to 
\begin{align*} 
\mathrm{Inv}\langle A, t \; \mid \; & 
r_i=1 \; (i \in I), 
\; a a^{-1} = 1, 
\; a^{-1} a = 1 \; (a \in A), \\ 
& tw_jt^{-1}tw_j^{-1}t^{-1}=1 \; (j \in J)  
\rangle.
\end{align*}
In particular we see from this that all of the elements $[a]_M$ $(a \in A)$ are invertible in $M$, and all of the elements $[t w_j t^{-1}]_M$ $(j \in J)$ are right invertible in $M$. The inverse monoid 
\begin{align}
& \Ipres{A}{ 
r_i=1 \; (i \in I), 
\; a a^{-1} = 1, 
\; a^{-1} a = 1 \; (a \in A)
} \label{eq_ipres}
\end{align}
is a group and thus is $E$-unitary.  It is well-known that free inverse monoids are $E$-unitary (see for example \cite[Theorem~1.1]{Margolis:1993tw}) so in particular the monoid $\Ipres{t}{}$ is $E$-unitary. 

In \cite[Proposition~7.1]{Jones84} Jones proves that the free product of two $E$-unitary inverse semigroups is again an $E$-unitary inverse semigroup. Jones's result can be applied to prove that the free product of two $E$-unitary inverse monoids is again an $E$-unitary inverse monoid. Alternatively, this fact can be deduced as a corollary of a  result of Stephen \cite[Theorem 6.5]{Stephen98} which gives sufficient conditions for the amalgamated free product of two $E$-unitary inverse semigroups to again be $E$-unitary. To apply Stephen's result one observes that the free product of two inverse monoids is isomorphic to the amalgamated semigroup free product where the identity elements of the two monoids are identified. It may then be verified that the hypotheses of \cite[Theorem 6.5]{Stephen98} are satisfied in this situation.  Therefore, the free product $S$ of the inverse monoid \eqref{eq_ipres} with $\Ipres{t}{}$, which has inverse monoid presentation 
\begin{align}
&\Ipres{A,t}{ 
r_i=1 \; (i \in I), 
\; a a^{-1} = 1, 
\; a^{-1} a = 1 \; (a \in A)
},
\end{align}
is also $E$-unitary. Now $K = G \ast FG(t)$ is the maximal group image of both $S$ and also of $M$. It follows that the identity mapping on $A \cup \{ t \}$ induces surjective homomorphisms $\phi, \psi$ and $\theta$ that make the following diagram commute:
\[
  \begin{tikzcd}
    S \arrow{r}{\psi} \arrow[swap]{dr}{\phi} & M \arrow{d}{\theta} \\
     & K
  \end{tikzcd}
\]
Since $S$ is $E$-unitary it follows that $M$ is $E$-unitary. Indeed, if $m \in M$ with $\theta(m) = 1$ then writing $m = \psi(s)$ for $s \in S$ we have $\phi(s) =  \theta(\psi(s))  = 1$ which since $S$ is $E$-unitary implies $s \in E(S)$. But then $m =  \psi(s) \in E(M)$ since homomorphisms map idempotents to idempotents. This completes the proof that $M$ is $E$-unitary.   

For the second part of the theorem, we shall prove the following key claim: 

\smallskip

\noindent For all $u \in (A \cup A^{-1})^*$ 
\[
[u]_G \in T \leq G 
\Longleftrightarrow 
\mbox{
$[tut^{-1}]_M$ is right invertible in $M$.
}
\tag{$\ddagger$}
\]
To prove ($\ddagger$), first suppose that $[u]_G \in T$. This means we can write 
\[
[u]_G = 
[w_{j_1} \ldots w_{j_l}]_G 
\]
where $j_q \in J$ for $1 \leq q \leq l$. Since this equation is written over the alphabet $A \cup A^{-1}$ and all these letters represent invertible elements in $M$, and all the defining relations in the presentation of $G$ also hold in $M$, it follows that 
\[
[u]_M = 
[w_{j_1} \ldots w_{j_l}]_M. 
\]
Since $[tw_jt^{-1}]_M$ is right invertible in $M$ for all $j \in J$, by applying Corollary~\ref{cor_good} it then follows that 
\begin{align*}
[tut^{-1}]_M &= [t w_{j_1} \ldots w_{j_l} t^{-1}]_M 
= [(t w_{j_1} t^{-1}) (t w_{j_2} t^{-1}) \ldots (t w_{j_l} t^{-1})]_M
\end{align*} 
which is right invertible in $M$ since it is a product of right invertible elements. 

For the proof of the converse implication of ($\ddagger$), let $U_R$ be the submonoid of right units of $M$. Let 
\[
P = \left(\bigcup_{i \neq 1} \mathrm{pref} (r_i) \right) \cup \mathrm{pref}(e r_1), 
\]
and let $X = \{ [p]_M : p \in P \}$. It follows from Proposition~\ref{IMM_FourPointTwo} that $X$ is a finite generating set for $U_R$. Since $M$ is $E$-unitary, it follows from \cite[Lemma~1.5]{Margolis:1993tw} (see also \cite[Theorem~3.8]{Stephen:1990ss}) that the canonical homomorphism $\nat: M \rightarrow K$ from $M$ onto its maximal group image $K = G \ast FG(t)$ induces an embedding of each $\mathscr{R}$-class of $M$ into $K$. In particular $\nat$ induces an embedding of the right units $U_R$, which is the $\mathscr{R}$-class of the identity element $1_M$ of $M$, into the group $K$. It follows that $U_R$ is isomorphic to the submonoid of the group $K$ generated by $\{[p]_K : p \in P\}$. From the definition of $e$ this is equal to the submonoid of $K$ generated by the set 
\[
\{ [x]_K : x \in A \cup A^{-1} \} \cup 
\{ [t]_K \} \cup
\{ [tw_j t^{-1}]_K : j \in J \}. 
\]
Since the submonoid of $K$ generated by $A \cup A^{-1}$ is $G$, it follows that $U_R$ is isomorphic to the submonoid of $K$ generated by $
\{t \} \cup G \cup  tWt^{-1}. 
$ Now suppose that $u \in (A \cup A^{-1})^*$ is a word such that $[tut^{-1}]_M$ is right invertible in $M$.  Then from above it follows that $[tut^{-1}]_G$ belongs to the submonoid of $K=G \ast FG(t)$ generated by $
\{t \} \cup G \cup  tWt^{-1}. 
$ It then follows from  Lemma~\ref{lem_key_claim} that $[u]_G \in T$. This completes the proof of the claim ($\ddagger$).  

To complete the proof of the theorem, suppose that $M$ has decidable word problem. Then there is an algorithm which for any word $w \in (A \cup A^{-1})^*$ decides whether or not $[w]_M$ is right invertible in $M$. Indeed, $[w]_M$ is right invertible if and only if $[ww^{-1}]_M = 1_M$. Therefore, given any word $u \in (A \cup A^{-1})^*$ we can decide whether or not $[tut^{-1}]_M$ is right invertible in $M$ which, by claim ($\ddagger$), is equivalent to deciding whether or not $[u]_G \in T \leq G$. Hence the membership problem of $T$ within $G$ is decidable. 
\end{proof}

Combining this construction with the results from the previous section then gives the main result of this section.   

\begin{thm}\label{main_thm_WP}
There is a one-relator inverse monoid $\Ipres{A}{w=1}$ with undecidable word problem. 
\end{thm}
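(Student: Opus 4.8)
The plan is to combine Theorem~\ref{thm_ConstructionGeneral} with Theorem~\ref{thm_LSApplication} by making a careful choice of the data $r_1, \ldots, r_m$ and $w_1, \ldots, w_k$. By Theorem~\ref{thm_LSApplication} there is a fixed finitely generated submonoid $M_0$ of the one-relator group $G = \GO$ whose membership problem within $G$ is undecidable. Write $G = \Gpres{A}{r=1}$ with $A = \{a,t\}$ (so $m=1$ and $r_1 = r$ is the single relator $atat^{-1}a^{-1}ta^{-1}t^{-1}$), and let $w_1, \ldots, w_k \in (A \cup A^{-1})^*$ be words representing a finite generating set for $M_0$, so that $M_0 = T$ is the submonoid of $G$ generated by $W = \{w_1, \ldots, w_k\}$.

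Now feed this data into Theorem~\ref{thm_ConstructionGeneral}. Since $m=1$, the presentation $(\dagger)$ has the single defining relation $er_1 = 1$, so the resulting inverse monoid is of the required form $\Ipres{A'}{w=1}$ with $A' = \{a,t\}$ (note that $t$ here is a second letter already present in $A$, so one should be slightly careful: relabel if necessary so that the alphabet of the inverse monoid is exactly $\{a,t\}$ and $w = e r$ where $e = e(a, tw_1t^{-1}, \ldots, tw_kt^{-1}, a^{-1})$). Theorem~\ref{thm_ConstructionGeneral} tells us that if this inverse monoid $M$ had decidable word problem, then the membership problem for $T = M_0$ within $G$ would be decidable — contradicting Theorem~\ref{thm_LSApplication}. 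Hence $M = \Ipres{A'}{w=1}$ has undecidable word problem, proving the theorem. It is also worth recording that Theorem~\ref{thm_ConstructionGeneral} guarantees $M$ is $E$-unitary, which gives the sharper statement mentioned in the abstract.

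The only genuine subtlety — rather than an obstacle — is bookkeeping about the alphabet: Theorem~\ref{thm_ConstructionGeneral} is stated with an alphabet $A$ for the group and an extra letter $t$ for the inverse monoid, whereas our group $G$ already uses a letter called $t$. This is resolved purely by renaming: take the group to be $\Gpres{\{a,s\}}{asas^{-1}a^{-1}sa^{-1}s^{-1}=1}$ on letters $a,s$, apply the construction with the fresh stable letter $t$, and observe the resulting inverse monoid presentation has a single relation of the form $w=1$. Everything else is an immediate application of the two cited theorems; no new mathematics is needed.

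\begin{proof}
Let $G = \GO$ be the one-relator group from Theorem~\ref{thm_LSApplication}, presented on the alphabet $A = \{a,t\}$ with the single defining relator $r = atat^{-1}a^{-1}ta^{-1}t^{-1}$; we relabel the two generators as $a_1 = a$ and $a_2 = t'$ so that we may reserve the letter $t$ for the construction, and write $G = \Gpres{A}{r=1}$ with $A = \{a_1,a_2\}$. By Theorem~\ref{thm_LSApplication} there is a finite set $W = \{w_1, \ldots, w_k\} \subseteq (A \cup A^{-1})^*$ such that the submonoid $T = \Mgen{[w_1]_G, \ldots, [w_k]_G}$ of $G$ generated by $W$ has undecidable membership problem within $G$.

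Apply Theorem~\ref{thm_ConstructionGeneral} with $m=1$, relator $r_1 = r$, and these words $w_1, \ldots, w_k$. Since $m=1$, the presentation $(\dagger)$ has a single defining relation $e r_1 = 1$, where
\[
e = e(a_1, a_2, tw_1t^{-1}, \ldots, tw_kt^{-1}, a_1^{-1}, a_2^{-1})
\]
is the idempotent word defined before the theorem. Thus the resulting inverse monoid $M$ is defined by a presentation of the form $\Ipres{\{a_1, a_2, t\}}{w=1}$ with $w = e r$, which is a one-relator inverse monoid presentation.

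By Theorem~\ref{thm_ConstructionGeneral}, if $M$ had decidable word problem then the membership problem for $T$ within $G$ would be decidable, contradicting the choice of $W$. Hence $M = \Ipres{\{a_1,a_2,t\}}{w=1}$ is a one-relator inverse monoid with undecidable word problem. Moreover Theorem~\ref{thm_ConstructionGeneral} also asserts that $M$ is $E$-unitary, so in fact $M$ is an $E$-unitary one-relator inverse monoid of the form $\Ipres{A}{w=1}$ with undecidable word problem.
\end{proof}
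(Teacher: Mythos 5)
Your proof is correct and follows essentially the same route as the paper: relabel the generators of the one-relator group from Theorem~\ref{thm_LSApplication} to free up the letter $t$, take the finitely generated submonoid with undecidable membership, and feed this data into Theorem~\ref{thm_ConstructionGeneral} with $m=1$ so that the presentation $(\dagger)$ has a single relation. (The only blemish is in your informal preamble, where the idempotent word $e(a, tw_1t^{-1}, \ldots, tw_kt^{-1}, a^{-1})$ omits the second group generator; your formal proof fixes this by correctly using $e(a_1, a_2, tw_1t^{-1}, \ldots, tw_kt^{-1}, a_1^{-1}, a_2^{-1})$.)
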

\begin{proof}
Let $A = \{a,z\}$ and let $G$ be the one-relator group 
\[
\Gpres{a,z}{
azaz^{-1}a^{-1}za^{-1}z^{-1} = 1}.
\]
Let $W = \{w_1, \ldots, w_k\}$ be a finite subset of $(A \cup A^{-1})^*$ such that the membership problem for $T=\Mgen{W}$ within $G$ is undecidable. Such a set $W$ exists by Theorem~\ref{thm_LSApplication}.  Set $e =e(a, z, tw_1t^{-1}, \ldots, tw_kt^{-1}, a^{-1}, z^{-1})$. Then by Theorem~\ref{thm_ConstructionGeneral} the one-relator inverse monoid
\[
\Ipres{a,z,t}{
eazaz^{-1}a^{-1}za^{-1}z^{-1} = 1}
\]
has undecidable word problem. This completes the proof. 
\end{proof} 

Since the inverse monoid constructed in the proof of Theorem~\ref{main_thm_WP} is $E$-unitary we immediately obtain the following result. 

\begin{cor}
There is an $E$-unitary one-relator inverse monoid $\Ipres{A}{w=1}$ with undecidable word problem. 
\end{cor}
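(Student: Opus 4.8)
The plan is to observe that this corollary requires essentially no new work, since the monoid exhibited in the proof of Theorem~\ref{main_thm_WP} has already been shown to be $E$-unitary. First I would recall the specific monoid constructed there, namely
\[
M = \Ipres{a,z,t}{eazaz^{-1}a^{-1}za^{-1}z^{-1} = 1},
\]
where $e = e(a, z, tw_1t^{-1}, \ldots, tw_kt^{-1}, a^{-1}, z^{-1})$ and $W = \{w_1,\ldots,w_k\}$ is chosen so that the membership problem for $\Mgen{W}$ within $G = \Gpres{a,z}{azaz^{-1}a^{-1}za^{-1}z^{-1}=1}$ is undecidable.

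Next I would note that $M$ is precisely an instance of the general construction of Theorem~\ref{thm_ConstructionGeneral}, obtained by taking $A = \{a,z\}$, a single group relator $r_1 = azaz^{-1}a^{-1}za^{-1}z^{-1}$ (so that $m = 1$), and the words $w_1,\ldots,w_k$ as above. The first conclusion of that theorem asserts that any inverse monoid of this form is $E$-unitary. Hence $M$ is $E$-unitary.

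Finally I would combine this with the conclusion of Theorem~\ref{main_thm_WP} that $M$ has undecidable word problem. Since $M$ is then a one-relator inverse monoid of the form $\Ipres{A}{w=1}$ that is simultaneously $E$-unitary and has undecidable word problem, it witnesses the corollary.

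I do not anticipate any genuine obstacle here. The only point requiring care is to confirm that the particular monoid of Theorem~\ref{main_thm_WP} is literally an instance of the construction in Theorem~\ref{thm_ConstructionGeneral}, so that the $E$-unitarity conclusion of the latter applies verbatim rather than merely by analogy. This is immediate upon comparing the two presentations: the defining relator of $M$ is exactly $er_1 = 1$ with the single further relator $r_1$ and no relators $r_2,\ldots,r_m$, matching the displayed presentation~($\dagger$) in the hypothesis of Theorem~\ref{thm_ConstructionGeneral}.
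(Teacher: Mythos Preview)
Your proposal is correct and matches the paper's approach exactly: the paper simply observes that the one-relator inverse monoid constructed in the proof of Theorem~\ref{main_thm_WP} is an instance of the construction in Theorem~\ref{thm_ConstructionGeneral} and is therefore $E$-unitary. Your verification that the presentation fits the form~($\dagger$) with $m=1$ is precisely the point, and nothing further is needed.
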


\section{Conclusions and open problems}
\label{sec_conclusions}

We have seen in Theorem~\ref{main_thm_WP} that there are one relator inverse monoids of the form $\Ipres{A}{w=1}$ with undecidable word problem. The key question for future research in this area is therefore: 
\begin{question}
For which words $w \in (A \cup A^{-1})^*$ does $\Ipres{A}{w=1}$ have decidable word problem?
\end{question} 
As mentioned in the introduction, there are many examples of words, or classes of words, for which it has been shown that $\Ipres{A}{w=1}$ has decidable word problem. For example it is true when $w$ is an idempotent word; see \cite{Margolis:1993tw}. In light of the results of Ivanov, Margolis and Meakin \cite{Ivanov:2001kl}, of particular importance is the case where $w$ is a reduced word. Indeed, the result  \cite[Theorem~2.2]{Ivanov:2001kl} states that if the word problem is decidable for all one relator inverse monoids $\Ipres{A}{w=1}$ with $w$ a reduced word, then it is decidable for all one-relator monoids $\Mpres{A}{u=v}$.  Note that in the examples of one-relator inverse monoids $\Ipres{A}{w=1}$ with undecidable word problem constructed in this paper, the word $w$ is not a reduced word.  Under the stronger assumption that $w$ is cyclically reduced, to show that the one-relator inverse monoid has decidable word problem it suffices, by \cite[Theorem~3.1]{Ivanov:2001kl},  to show that the corresponding one-relator group has a decidable prefix membership problem. As mentioned in the introduction, the prefix membership problem for one-relator groups has been shown to be decidable in a number of special cases. See also \cite{DolinkaGrayInPrep} for some more recent results showing that the prefix membership problem is decidable for certain classes of one-relator groups which are low down in the Magnus--Moldovanskii hierarchy.

Let $M$ be the one-relator $E$-unitary inverse monoid $\Ipres{A}{w=1}$ with undecidable word problem given by the proof of Theorem~\ref{main_thm_WP}. By Proposition~\ref{IMM_FourPointTwo} the group of units $U$ of $M$ is finitely generated, and since $M$ is $E$-unitary it follows that $U$ is isomorphic to a finitely generated subgroup of the one-relator group $\Gpres{A}{w=1}$. Therefore the group of units $U$ of $M$ has decidable word problem, while the monoid $M$ itself does not have decidable word problem. This shows that in general the word problem for one-relator inverse monoids of the form $\Ipres{A}{w=1}$ does not reduce to the word problem for their groups of units. This contrasts sharply with the situation for one-relator monoid presentations of the form $\Mpres{A}{w=1}$. Adjan proved that one-relator monoids of the form $\Mpres{A}{w=1}$ have decidable word problem. He did this by first proving that the group of unit of a special one-relator monoid $\Mpres{A}{w=1}$ is a one-relator group, and then showing that the word problem for the monoid can be reduced to solving the word problem in this group, which in turn is decidable by Magnus's theorem.  This reduction result was later extended by Makanin \cite{Makanin66} who showed that the monoid $M$ defined by the presentation $\Mpres{A}{w_1=1, \ldots, w_k=1}$ has a finitely presented group of units, and that $M$ has decidable word problem if and only if its group of units also does. Thus, the example constructed in Theorem~\ref{main_thm_WP} shows that there is no hope in general of using the same reduction to the group of units approach for the word problem for inverse monoids of the form $\Ipres{A}{w_1=1, \ldots, w_k=1}$. Further results showing the contrasting behaviour of inverse monoids defined by presentations of this form, when compared to monoids defined by such presentations, will be explored in the paper \cite{GrayRuskucInPrep} where the groups of units of these inverse monoids are investigated.

\section*{Acknowledgements}

The author thanks Jim Howie for his helpful comments in relation to Remark~\ref{rem:converse}. 
This research was supported by the EPSRC grant EP/N033353/1 ``Special inverse monoids: subgroups, structure, geometry, rewriting systems and the word problem''.


\begin{thebibliography}{10}

\bibitem{Adjan:1966bh}
S.~I. Adjan.
\newblock Defining relations and algorithmic problems for groups and
  semigroups.
\newblock {\em Trudy Mat. Inst. Steklov.}, 85:123, 1966.

\bibitem{Adyan:1987ys}
S.~I. Adjan and G.~U. Oganesyan.
\newblock On the word and divisibility problems for semigroups with one
  relation.
\newblock {\em Mat. Zametki}, 41(3):412--421, 458, 1987.

\bibitem{BookAndOtto}
R.~V. Book and F.~Otto.
\newblock {\em String-rewriting systems}.
\newblock Texts and Monographs in Computer Science. Springer-Verlag, New York,
  1993.

\bibitem{BooneWP}
W.~W. Boone, F.~B. Cannonito, and R.~C. Lyndon, editors.
\newblock {\em Word problems}.
\newblock North-Holland Publishing Co. Amsterdam-London, 1973.
\newblock Decision problems and the Burnside problem in group theory, An
  outgrowth of the Conference on Decision Problems in Group Theory, University
  of California at Irvine, Calif., September 1965, Dedicated to the memory of
  Hanna Neumann (1914--1971), Studies in Logic and the Foundations of
  Mathematics, Vol. 71.

\bibitem{Bridson:1999fk}
M.~R. Bridson and A.~Haefliger.
\newblock {\em Metric spaces of non-positive curvature}, volume 319 of {\em
  Grundlehren der Mathematischen Wissenschaften [Fundamental Principles of
  Mathematical Sciences]}.
\newblock Springer-Verlag, Berlin, 1999.

\bibitem{Charney072}
R.~Charney.
\newblock An introduction to right-angled {A}rtin groups.
\newblock {\em Geom. Dedicata}, 125:141--158, 2007.

\bibitem{Costa2012}
A.~E. Costa, M.~Farber, and T.~Kappeler.
\newblock Topics of stochastic algebraic topology.
\newblock In {\em Proceedings of the {W}orkshop on {G}eometric and
  {T}opological {M}ethods in {C}omputer {S}cience ({GETCO})}, volume 283 of
  {\em Electron. Notes Theor. Comput. Sci.}, pages 53--70. Elsevier Sci. B. V.,
  Amsterdam, 2012.

\bibitem{DolinkaGrayInPrep}
I.~Dolinka and R.~D. Gray.
\newblock New results on the prefix membership problem for one-relator groups.
\newblock {\em In preparation}.

\bibitem{Epstein92}
D.~B.~A. Epstein, J.~W. Cannon, D.~F. Holt, S.~V.~F. Levy, M.~S. Paterson, and
  W.~P. Thurston.
\newblock {\em Word processing in groups}.
\newblock Jones and Bartlett Publishers, Boston, MA, 1992.

\bibitem{Gersten1991}
S.~M. Gersten and H.~B. Short.
\newblock Rational subgroups of biautomatic groups.
\newblock {\em Ann. of Math. (2)}, 134(1):125--158, 1991.

\bibitem{GrayRuskucInPrep}
R.~D. Gray and N.~Ru\v{s}kuc.
\newblock On units of special and one-relator inverse monoids.
\newblock {\em In preparation}.

\bibitem{Hermiller:2010bs}
S.~Hermiller, S.~Lindblad, and J.~Meakin.
\newblock Decision problems for inverse monoids presented by a single sparse
  relator.
\newblock {\em Semigroup Forum}, 81(1):128--144, 2010.

\bibitem{JimHowiePersonalCommunication}
J.~Howie.
\newblock {\em Personal communication.}, 2019.

\bibitem{Howie95}
J.~M. Howie.
\newblock {\em Fundamentals of semigroup theory}, volume~12 of {\em London
  Mathematical Society Monographs. New Series}.
\newblock The Clarendon Press, Oxford University Press, New York, 1995.
\newblock Oxford Science Publications.

\bibitem{Ivanov:2001kl}
S.~V. Ivanov, S.~W. Margolis, and J.~C. Meakin.
\newblock On one-relator inverse monoids and one-relator groups.
\newblock {\em J. Pure Appl. Algebra}, 159(1):83--111, 2001.

\bibitem{Jones84}
P.~R. Jones.
\newblock Free products of inverse semigroups.
\newblock {\em Trans. Amer. Math. Soc.}, 282(1):293--317, 1984.

\bibitem{Arye2012}
A.~Juh\'{a}sz.
\newblock Solution of the membership problem for certain rational subsets of
  one-relator groups with a small cancellation condition.
\newblock {\em Internat. J. Algebra Comput.}, 22(4):1250027, 30, 2012.

\bibitem{Kim13}
S.~Kim and T.~Koberda.
\newblock Embedability between right-angled {A}rtin groups.
\newblock {\em Geom. Topol.}, 17(1):493--530, 2013.

\bibitem{Lallement:1974pi}
G.~Lallement.
\newblock On monoids presented by a single relation.
\newblock {\em J. Algebra}, 32:370--388, 1974.

\bibitem{Lohrey15}
M.~Lohrey.
\newblock The rational subset membership problem for groups: a survey.
\newblock In {\em Groups {S}t {A}ndrews 2013}, volume 422 of {\em London Math.
  Soc. Lecture Note Ser.}, pages 368--389. Cambridge Univ. Press, Cambridge,
  2015.

\bibitem{Lohrey08}
M.~Lohrey and B.~Steinberg.
\newblock The submonoid and rational subset membership problems for graph
  groups.
\newblock {\em J. Algebra}, 320(2):728--755, 2008.

\bibitem{Lohrey11}
M.~Lohrey and B.~Steinberg.
\newblock Tilings and submonoids of metabelian groups.
\newblock {\em Theory Comput. Syst.}, 48(2):411--427, 2011.

\bibitem{LouderWiltonCanada2017}
L.~Louder and H.~Wilton.
\newblock Stackings and the {$W$}-cycles conjecture.
\newblock {\em Canad. Math. Bull.}, 60(3):604--612, 2017.

\bibitem{LouderWilton}
L.~Louder and H.~Wilton.
\newblock One-relator groups with torsion are coherent.
\newblock {\em arXiv preprint arXiv:1805.11976}, 2018.

\bibitem{Lyndon:2001lh}
R.~C. Lyndon and P.~E. Schupp.
\newblock {\em Combinatorial group theory}.
\newblock Classics in Mathematics. Springer-Verlag, Berlin, 2001.
\newblock Reprint of the 1977 edition.

\bibitem{Magnus32}
W.~Magnus.
\newblock Das {I}dentit\"{a}tsproblem f\"{u}r {G}ruppen mit einer definierenden
  {R}elation.
\newblock {\em Math. Ann.}, 106(1):295--307, 1932.

\bibitem{Makanin66}
G.~S. Makanin.
\newblock On the identity problem in finitely defined semigroups.
\newblock {\em Dokl. Akad. Nauk SSSR}, 171:285--287, 1966.

\bibitem{Margolis:1995qo}
S.~Margolis, J.~Meakin, and M.~Sapir.
\newblock Algorithmic problems in groups, semigroups and inverse semigroups.
\newblock In {\em Semigroups, formal languages and groups ({Y}ork, 1993)},
  volume 466 of {\em NATO Adv. Sci. Inst. Ser. C Math. Phys. Sci.}, pages
  147--214. Kluwer Acad. Publ., Dordrecht, 1995.

\bibitem{Margolis:2005il}
S.~W. Margolis, J.~Meakin, and Z.~{\v{S}}uni{{\'k}}.
\newblock Distortion functions and the membership problem for submonoids of
  groups and monoids.
\newblock In {\em Geometric methods in group theory}, volume 372 of {\em
  Contemp. Math.}, pages 109--129. Amer. Math. Soc., Providence, RI, 2005.

\bibitem{Margolis:1993tw}
S.~W. Margolis and J.~C. Meakin.
\newblock Inverse monoids, trees and context-free languages.
\newblock {\em Trans. Amer. Math. Soc.}, 335(1):259--276, 1993.

\bibitem{MMS87}
S.~W. Margolis, J.~C. Meakin, and J.~B. Stephen.
\newblock Some decision problems for inverse monoid presentations.
\newblock In {\em Semigroups and their applications ({C}hico, {C}alif., 1986)},
  pages 99--110. Reidel, Dordrecht, 1987.

\bibitem{Meakin:2007zt}
J.~Meakin.
\newblock Groups and semigroups: connections and contrasts.
\newblock In {\em Groups {S}t. {A}ndrews 2005. {V}ol. 2}, volume 340 of {\em
  London Math. Soc. Lecture Note Ser.}, pages 357--400. Cambridge Univ. Press,
  Cambridge, 2007.

\bibitem{Moldavanski67}
D.~I. Moldavanski\u{\i}.
\newblock Certain subgroups of groups with one defining relation.
\newblock {\em Sibirsk. Mat. \v{Z}.}, 8:1370--1384, 1967.

\bibitem{Pride76}
S.~J. Pride.
\newblock Certain subgroups of certain one-relator groups.
\newblock {\em Math. Z.}, 146(1):1--6, 1976.

\bibitem{Sapir:2011fk}
M.~Sapir and I.~{\v{S}}pakulov{{\'a}}.
\newblock Almost all one-relator groups with at least three generators are
  residually finite.
\newblock {\em J. Eur. Math. Soc. (JEMS)}, 13(2):331--343, 2011.

\bibitem{Stephen:1990ss}
J.~B. Stephen.
\newblock Presentations of inverse monoids.
\newblock {\em J. Pure Appl. Algebra}, 63(1):81--112, 1990.

\bibitem{Stephen93}
J.~B. Stephen.
\newblock Inverse monoids and rational subsets of related groups.
\newblock {\em Semigroup Forum}, 46(1):98--108, 1993.

\bibitem{Stephen98}
J.~B. Stephen.
\newblock Amalgamated free products of inverse semigroups.
\newblock {\em J. Algebra}, 208(2):399--424, 1998.

\bibitem{Vogtmann15}
K.~Vogtmann.
\newblock {$GL(n,\Bbb Z)$}, {$Out(F_n)$} and everything in between:
  automorphism groups of {RAAG}s.
\newblock In {\em Groups {S}t {A}ndrews 2013}, volume 422 of {\em London Math.
  Soc. Lecture Note Ser.}, pages 105--127. Cambridge Univ. Press, Cambridge,
  2015.

\bibitem{Wise12}
D.~T. Wise.
\newblock {\em From riches to raags: 3-manifolds, right-angled {A}rtin groups,
  and cubical geometry}, volume 117 of {\em CBMS Regional Conference Series in
  Mathematics}.
\newblock Published for the Conference Board of the Mathematical Sciences,
  Washington, DC; by the American Mathematical Society, Providence, RI, 2012.

\end{thebibliography}
\end{document}